\newtheorem{theorem}{Theorem}[section]
\newtheorem{definition}[theorem]{Definition}
\newtheorem{lemma}[theorem]{Lemma}
\newtheorem{corollary}[theorem]{Corollary}
\definecolor{lightgray}{gray}{0.5}
\definecolor{darkred}{rgb}{.6,0,0}
\definecolor{darkblue}{rgb}{0,0,0.5}
\newcommand{\sspcoef}{\mathcal{C}}
\newcommand{\ceff}{\sspcoef_{\textnormal{eff}}}
\newcommand{\clin}{\sspcoef^{\textnormal{lin}}_{s,q}}
\newcommand{\Dt}{\Delta t}
\newcommand{\DtFE}{\Delta t_{\textnormal{FE}}}
\newcommand{\equival}[1]{\mbox{{\large $\underaccent{\hspace{8pt}#1}{\simeq } \,$}}}
\newcommand{\nline}{\tabularnewline\noalign{\vskip 2pt}}
\newcommand{\mydashrule}{%
  \noalign{\vskip 4pt}%
  \hdashline[2pt/3pt]%
  \noalign{\vskip 4pt}}
\newcommand{\mymidrule}{\midrule\noalign{\vskip 1.5pt}}
\newcolumntype{M}[1]{>{\vspace{0.25pt}\centering\hspace{0pt}\vspace{0.25pt}}m{#1}}
\newtheorem{remark}[theorem]{Remark}
\numberwithin{theorem}{section}
\numberwithin{equation}{section}
\numberwithin{table}{section}
\numberwithin{figure}{section}
\newenvironment{smalltrees}{
	\psset{treesep=1.5ex,levelsep=1.2ex,treemode=U,showbbox=false}
	
	\renewcommand{\Ts}{\Tdot[dotstyle=*,dotscale=0.7]}
}
\newcommand{\Ts}{\Tdot[dotstyle=*,dotscale=0.9]}
\newcommand{\tree}[1]{
\ifthenelse{\equal{#1}{1}}{\raisebox{0.5ex}{{\pstree{\Ts}{}}}}{}%
\ifthenelse{\equal{#1}{2}}{\raisebox{0.0ex}{{\pstree{\Ts}{\Tn\Ts}}}}{}%
\ifthenelse{\equal{#1}{3}}{\raisebox{0.0ex}{{\pstree{\Ts}{\Ts\Ts}}}}{}%
\ifthenelse{\equal{#1}{4}}{\raisebox{-0.5ex}{{\pstree{\Ts}{\Tn\pstree{\Ts}{\Ts\Tn}}}}}{}%
\ifthenelse{\equal{#1}{5}}{\raisebox{0.0ex}{{\pstree{\Ts}{\Ts\Ts\Ts}}}}{}%
\ifthenelse{\equal{#1}{6}}{\raisebox{-0.5ex}{{\pstree{\Ts}{\Ts\pstree{\Ts}{\Ts\Tn}}}}}{}%
\ifthenelse{\equal{#1}{7}}{\raisebox{-0.5ex}{{\pstree{\Ts}{\Tn\pstree{\Ts}{\Ts\Ts}}}}}{}%
\ifthenelse{\equal{#1}{8}}{\raisebox{-0.7ex}{{\pstree{\Ts}{\Tn\pstree{\Ts}{\pstree{\Ts}{\Tn\Ts}\Tn}}}}}{}%
\ifthenelse{\equal{#1}{9}}{\raisebox{0.0ex}{{\pstree{\Ts}{\Ts\Ts\Ts\Ts}}}}{}%
\ifthenelse{\equal{#1}{10}}{\raisebox{-0.3ex}{{\pstree{\Ts}{\Ts\Ts\pstree{\Ts}{\Ts\Tn}}}}}{}%
\ifthenelse{\equal{#1}{11}}{\raisebox{-0.5ex}{{\pstree{\Ts}{\Ts\pstree{\Ts}{\Ts\Ts}}}}}{}%
\ifthenelse{\equal{#1}{12}}{\raisebox{-1.0ex}{{\pstree{\Ts}{\Ts\pstree{\Ts}{\pstree{\Ts}{\Tn\Ts}\Tn}}}}}{}%
\ifthenelse{\equal{#1}{13}}{\raisebox{-0.5ex}{{\pstree{\Ts}{\pstree{\Ts}{\Ts}\pstree{\Ts}{\Ts}}}}}{}%
\ifthenelse{\equal{#1}{14}}{\raisebox{-0.5ex}{{\pstree{\Ts}{\Tn\pstree{\Ts}{\Ts\Ts\Ts}}}}}{}%
\ifthenelse{\equal{#1}{15}}{\raisebox{-0.7ex}{{\pstree{\Ts}{\Tn\pstree{\Ts}{\Ts\pstree{\Ts}{\Ts\Tn}}}}}}{}%
\ifthenelse{\equal{#1}{16}}{\raisebox{-0.7ex}{{\pstree{\Ts}{\Tn\pstree{\Ts}{\pstree{\Ts}{\Ts\Ts}\Tn}}}}}{}%
\ifthenelse{\equal{#1}{17}}{\raisebox{-1.3ex}{{\pstree{\Ts}{\Tn\pstree{\Ts}{\pstree{\Ts}{\Tn\pstree{\Ts}{\Ts\Tn}}\Tn}}}}}{}%
}
\title{Strong stability preserving explicit Runge--Kutta methods of maximal effective order
%		\thanks{Received by the editors July 7, 2012; 
%		accepted for publication (in revised form) ; 
%		published electronically }
}
\author{
        Yiannis Hadjimichael\thanks{Computer, Electrical and Mathematical Sciences and 
        Engineering Division, King Abdullah University of Science \& Technology (KAUST), 
        P.O. Box 4700, Thuwal 23955, Saudi Arabia
        (\url{yiannis.hadjimichael@kaust.edu.sa}, 
        \url{david.ketcheson@kaust.edu.sa}).
        The work of these authors is supported by Award No. FIC/2010/05, made by King 
        Abdullah University of Science and Technology (KAUST).}
        \and 
        Colin B.~Macdonald\thanks{Mathematical Institute, University of Oxford, OX1\,3LB, UK 
        (\url{macdonald@maths.ox.ac.uk}).
        The work of this author was supported by NSERC 
        Canada and by Award No KUK-C1-013-04 made by King Abdullah University of Science 
        and Technology (KAUST).}
        \and 
        David I.~Ketcheson\footnotemark[1]
        \and 
        James H.~Verner\thanks{Department of Mathematics, Simon Fraser University,
        Burnaby, British Columbia, V5A\,1S6, Canada
        (\url{jverner@pims.math.ca}).
        The work of this author was supported by Simon Fraser University.}
}
\date{}
\begin{document}

        \maketitle
        
        \begin{abstract}
                We apply the concept of effective order to strong stability preserving 
                (SSP) explicit Runge--Kutta methods.
                Relative to classical Runge--Kutta methods, methods with an effective order of accuracy
                are designed to satisfy a relaxed set of order conditions, but yield higher 
                order accuracy  when composed with special starting and stopping methods. 
         %The relaxed order conditions allow for greater freedom in the design
         %of methods with effective order of accuracy. 
         We show that this allows the construction of four-stage SSP methods with 
         effective order four (such methods cannot have classical order four). 
         However, we also prove that effective order five methods---like classical
         order five methods---require the use of non-positive weights and so cannot
         be SSP.
         By numerical optimization, we construct explicit SSP Runge--Kutta methods 
         up to effective order four and establish the optimality of many of them.
                Numerical experiments demonstrate the validity of these methods in
                practice.
        \end{abstract}
		
		%%% Turn on the following for siam style %%%
		%%%%%%%%%%%%%%%%%%%%%%%%%%%%%%%%%
%		\begin{keywords} 
%			strong stability preserving, monotonicity, effective order, Runge–Kutta methods, time integration
%		\end{keywords}
%
%		\begin{AMS}
%			Primary, 65M20; Secondary, 65L06
%		\end{AMS}
%
%		\pagestyle{myheadings}
%		\thispagestyle{plain}
%		\markboth{Y.~HADJIMICHAEL, C.~B. MACDONALD, D.~I. KETCHESON AND J.~H. VERNER}
%		{SSPRK METHODS OF MAXIMAL EFFECTIVE ORDER}
		%%%%%%%%%%%%%%%%%%%%%%%%%%%%%%%%%
		
        %%% Main body %%%
        \section{Introduction}\label{sec:Intro}
Strong stability preserving time discretization methods were originally
developed for the solution of nonlinear hyperbolic 
partial differential equations (PDEs).  Solutions of such PDEs may 
contain discontinuities even when the initial conditions are smooth.
Many numerical methods for their solution are based on a method-of-lines approach 
in which the problem is first discretized in space to yield a system of ODEs. 
The spatial discretization is often chosen to ensure the solution is total variation diminishing (TVD),
in order to avoid the appearance of spurious oscillations near discontinuities,
\emph{when coupled with first-order forward Euler time integration}.
Strong stability preserving (SSP) time discretizations (also known as TVD
discretizations \cite{Gottlieb/Shu:1998}) are high-order time
discretizations that guarantee the TVD property (or other convex functional bounds), with a
possibly different step-size restriction \cite{Gottlieb2011a}.
Section~\ref{sec:SSP} reviews Runge--Kutta methods and the concept of
strong stability preserving methods.

Explicit SSP Runge--Kutta methods cannot have order greater
than four \cite{Ruuth2002}.  However, a Runge--Kutta method may 
achieve an effective order of accuracy higher than its classical order 
by the use of special starting and stopping procedures.
The conditions for a method to have effective order $q$ are 
in general less restrictive than the conditions for a method
to have classical order $q$.
Section~\ref{sec:Algebraic_RK} presents a brief overview of the
algebraic representation of Runge--Kutta methods, following Butcher
\cite{Butcher2008_book}.
This includes the concept of effective order and a list of effective
order conditions.

We examine the SSP properties of explicit Runge--Kutta methods whose
effective order is greater than their classical order.
Previous studies of SSP Runge--Kutta methods have considered only the
classical order of the methods.
Three natural questions are:
\begin{itemize}
    \item Can an SSP Runge--Kutta method have effective order of accuracy greater than four?
    \item If we only require methods to have {\em effective} order $q$, is it possible to achieve larger
            SSP coefficients than those obtained in methods with {\em classical} order $q$?
    \item SSP Runge--Kutta methods of order four require at least five stages.  Can SSP methods of 
          effective order four have fewer stages?
          
\end{itemize}
We show in Section \ref{sec:ExRK_barrier} that the answer to the first question is negative.
We answer the second question by numerically solving the problem of optimizing
the SSP coefficient over the class of methods with effective order $q$;
see Section~\ref{sec:optimal_ESSPRK}.  
Most of the methods we find are shown to be optimal, as they achieve a certain theoretical
upper bound on the SSP coefficient that is obtained by considering only
linear problems \cite{Kraaijevanger1986}.
We answer the last question affirmatively
by construction, also in Section~\ref{sec:optimal_ESSPRK}.
%Explicit SSP Runge--Kutta methods of order four require at 
%least five stages \cite{Gottlieb/Shu:1998}. 
%In contrast, we construct explicit SSP Runge--Kutta methods
%of effective order four with only four stages.
%Following this result, we had hoped to overcome the fifth-order
%barrier for explicit SSP Runge--Kutta methods \cite{Ruuth2002}; instead we
%prove that the barrier also holds for SSP methods of effective
%order five.
The paper concludes with numerical experiments in
Section~\ref{sec:numerics} and conclusions in
Section~\ref{sec:Conclusion}.

        \section{Strong stability preserving Runge--Kutta methods}\label{sec:SSP}
Strong stability preserving (SSP) time-stepping methods were originally introduced
for time integration of systems of hyperbolic conservation laws
\cite{Shu/Osher:1988} 
\begin{align}\label{eq:pde}
	\bm{U}_t + \nabla \cdot \bm{f}(\bm{U}) = 0,   
\end{align}
with appropriate initial and boundary conditions.
A spatial discretization gives the system of ODEs
\begin{align}\label{eq:ode_system}
    \bm{u}'(t) = \bm{F}(\bm{u}(t)),
\end{align}
where $\bm{u}$ is a vector of continuous-in-time grid values approximating 
the solution $\bm{U}$ at discrete grid points.
Of course, \eqref{eq:ode_system} can arise in many ways and $\bm{F}$
need not necessarily represent a spatial discretization.
Particularly, $\bm{F}$ may be time-dependent, but we can always make a 
transformation to an autonomous form.
In any case, a time discretization then produces a sequence of
solutions $\bm{u}^{n} \approx \bm{u}(t_n)$.
This work studies explicit Runge--Kutta time discretizations.
An explicit $s$-stage Runge--Kutta method takes the form
\begin{align*}
	\bm{u}^{n+1} &= \bm{u}^{n} + \Dt \sum_i^s b_i \bm{F}(\bm{Y}_i), 
\end{align*}
where
\begin{align*}
	\bm{Y}_i &= \bm{u}^{n} + \Dt \sum_j^{i-1} a_{ij} \bm{F}(\bm{Y}_j).
\end{align*}
Such methods are characterized by the coefficient matrix $A = (a_{ij}) \in 
\mathbb{R}^{s \times s}$, the weight vector $\bm{b} = (b_i) \in \mathbb{R}^s$
and the abscissa $\bm{c} = (c_i) \in \mathbb{R}^s$, where 
$c_i = \sum_{j=1}^{i-1}a_{ij}$.
The accuracy and stability of the method depend on the coefficients of the 
Butcher tableau $(A,\bm{b},\bm{c})$ \cite{Butcher2008_book}.

In some cases, the solutions of hyperbolic conservation laws satisfy a 
monotonicity property. For example, if \eqref{eq:pde} is scalar then solutions 
are monotonic in the total variation semi-norm \cite{Ketcheson2008}.
% probably should be a ref about HCLs (not SSP) but good enough for now
For this reason, many popular spatial discretizations are designed such 
that, for a suitable class of problems, the solution $\bm{u}$ in 
\eqref{eq:ode_system} computed with the forward Euler scheme is
non-increasing (in time) in some norm, semi-norm, or convex functional; i.e.,
\begin{align}\label{eq:forwardEuler}
    \|\bm{u} + \Dt\bm{F}(\bm{u})\| \le \|\bm{u}\|, \quad \text{for all } \bm{u} \text{ and for } 0 \le \Dt \le \DtFE.
\end{align}
If this is the case, then an SSP method also generates a solution whose norm is
non-increasing in time, under a modified time-step restriction.
\begin{definition}[Strong Stability Preserving]
	A Runge--Kutta method is said to be \emph{strong stability preserving} with
	\emph{SSP coefficient} $\sspcoef > 0$ if, whenever the forward Euler condition
	\eqref{eq:forwardEuler} holds and 
	\begin{align*}
		0 \leq \Dt \leq \sspcoef \DtFE,
	\end{align*}
	the Runge--Kutta method generates a monotonic sequence of solution values $\bm{u}^n$ satisfying
	\begin{align*}
  		\|\bm{u}^{n+1}\| \le \|\bm{u}^n\|.
	\end{align*}
\end{definition}

Note that $\DtFE$ is a property of the spatial discretization $\bm{F}$
and is independent of $\bm{u}$.
The SSP coefficient $\sspcoef$ is a property of the particular
time-stepping method and quantifies the allowable time step size relative 
to that of the forward Euler method.
Generally we want the SSP coefficient to be as large as possible for efficiency.
To allow a fair comparison of explicit methods with different number of stages, 
we consider the \emph{effective SSP coefficient}
\begin{align*}
	\ceff = \frac{\sspcoef}{s}.
\end{align*}
Note that the use of the word \emph{effective} here is unrelated to the 
concept of \emph{effective order} introduced in Section~\ref{sec:Algebraic_RK}.

\subsection{Optimal SSP schemes}\label{subsec:Optimal_SSPRK}
We say that an SSP Runge--Kutta method is optimal if it has the largest 
possible SSP coefficient for a given order and a given number of stages.
The search for these optimal methods was originally based on
expressing the Runge--Kutta method as combinations of forward Euler
steps (the Shu--Osher form) and solving a non-linear optimization
problem \cite{Gottlieb/Shu:1998, Gottlieb2001, Spiteri2003a, Spiteri2003b, 
Ruuth2004, Ruuth:2006}.
However, the SSP coefficient is related to the 
\emph{radius of absolute monotonicity} \cite{Kraaijevanger1991} and, 
for irreducible Runge--Kutta methods, the two are equivalent 
\cite{Ferracina2004, Higueras2004}.
This gives a simplified algebraic characterization of the SSP coefficient
\cite{Ferracina2005}; it is the maximum value of $r$ such that the following
conditions hold:
\begin{subequations} \label{eq:absmon}
\begin{align}
    K(I + rA)^{-1} \geq 0 \\
    \bm{e}_{s+1} - rK(I + rA)^{-1}\bm{e}_{s} \geq 0,
\end{align}
\end{subequations}
provided that $I + rA$ is invertible.
Here
\begin{equation*}
    K = \left(
            \begin{array}{c}
                     A              \\
                     \bm{b}^{\texttt{T}}
            \end{array}
         \right),
\end{equation*}
while $\bm{e}_s$ denotes the vector of ones of length $s$ and $I$ is the
$s \times s$ identity matrix.
The inequalities are understood component-wise.

The optimization problem of finding optimal SSP Runge--Kutta methods
can thus be written as follows:
\begin{equation}\label{eq:SSP_opt}
    \max_{A, \bm{b}, r} \; r \quad \text{subject to \eqref{eq:absmon} and } \Phi(K) = 0.
\end{equation}
Here \( \Phi(K) \) represents the  order conditions.

Following \cite{Ketcheson2008, Ketcheson/Macdonald/Gottlieb:2009}, 
we will numerically solve the optimization problem \eqref{eq:SSP_opt} to find
optimal explicit SSP Runge--Kutta methods for various effective orders of accuracy.
However, we first need to define the order conditions $\Phi(K)$ for these methods.
This is discussed in the next section.

        \section{The effective order of Runge--Kutta methods}\label{sec:Algebraic_RK}

The definition, construction, and application of methods with an
effective order of accuracy relies on the use of starting and stopping
methods.
%The successive application of these methods results in a scheme that attains
%higher order of accuracy than the order of its consisting methods.
Specifically, we consider a \emph{starting method} $S$, a \emph{main
  method} $M$, and a \emph{stopping method} $S^{-1}$.
%: i.e., the inverse of $S$ which
%annihilates the work of the starting method (up to order $q$).
The successive use of these three methods results in a method $P =
S^{-1}MS$, which denotes the application of method $S$, followed by
method $M$, followed by method $S^{-1}$.
The method $S^{-1}$ is an ``inverse'' of method $S$.
We want $P$ to have order $q$, whereas $M$ might have lower classical
order $p < q$.
We then say $M$ has \emph{effective order} $q$.

When the method $P$ is used for $n$ steps,
$$P^n = (S^{-1}MS)^n = (S^{-1}MS) \cdots (S^{-1}MS) (S^{-1}MS),$$
it turns out that only $M$ need be used repeatedly, as in
$S^{-1} M^n S$,
because %, as suggested by the $S^{-1}$ notation,
$S S^{-1}$ leaves the solution unchanged up to order $q$.
The starting method introduces a perturbation to the solution,
followed by $n$ time steps of the main method $M$, and finally the
stopping method is used to correct the solution.
In Section~\ref{subsec:starting_stopping}, we propose alternative
starting and stopping procedures which allow the overall procedure to
be SSP.

The effective order of a Runge--Kutta method is defined in an abstract 
algebraic context introduced by Butcher \cite{Butcher1969} and developed 
further in \cite{Butcher1972, Hairer1974, Butcher1996, Butcher1998} and 
others.
We follow the book \cite{Butcher2008_book} in our %description and
derivation of the effective order conditions.

\subsection{The algebraic representation of Runge--Kutta methods}\label{subsec:Algebraic_representation}

\begin{table}
	\caption{Elementary weights $\alpha(t_i)$ of trees $t_i$ up to order five for a
  		Runge--Kutta method with Butcher tableau $(A,\bm{b},\bm{c})$. 
  		Here $C$ is a diagonal matrix with components 
  		$c_{i} = \sum_{j=1}^{i-1} a_{ij}$ and exponents of vectors 
  		represent component exponentiation.
  		By convention $\alpha_0 = \alpha(t_{0}) = 1$, where $t_{0}$ 
  		denotes the empty tree.}
	\centering
	\begin{smalltrees}
		\begin{tabular}{ccccccccc}
    		\cline{1-4}\cline{6-9}
                \noalign{\vskip 2pt}
    		$i$ & tree $t_i$ & $\alpha(t_i)$ & $\gamma(t_i)$ & & $i$ & tree $t_i$ & $\alpha(t_i)$ & $\gamma(t_i)$ \\
                \noalign{\vskip 1pt}
    		\cline{1-4}\cline{6-9}
                \noalign{\vskip 3pt}
    		0 & $\emptyset$ \hspace{15pt}  & 1 & 0 & & 9 & \hspace{15pt} \tree{9} & $\bm{b}^T\bm{c}^4$ & 5 \\
    		1 & \hspace{15pt} \tree{1} & $\bm{b}^T\bm{e}$ & 1 & & 10 & \tree{10} \hspace{15pt} & $\bm{b}^TC^2A\bm{c}$ & 10 \\
    		2 & \tree{2} \hspace{15pt}  & $\bm{b}^T\bm{c}$ & 2 & & 11 & \hspace{15pt} \tree{11} & $\bm{b}^TCA\bm{c}^2$ & 15 \\
    		3 & \hspace{15pt} \tree{3} & $\bm{b}^T\bm{c}^2$ & 3 & & 12 & \tree{12} \hspace{15pt} & $\bm{b}^TCA^2\bm{c}$ & 30 \\
    		4 & \tree{4} \hspace{15pt}  & $\bm{b}^TA\bm{c}$ & 6 & & 13 & \hspace{15pt} \tree{13} & $\bm{b}^T(A\bm{c})^2$ & 20 \\
    		5 & \hspace{15pt} \tree{5} & $\bm{b}^T\bm{c}^3$ & 4 & & 14 & \tree{14} \hspace{15pt} & $\bm{b}^TA\bm{c}^3$ & 20 \\
    		6 & \tree{6} \hspace{15pt}  & $\bm{b}^TCA\bm{c}$ & 8 & & 15 & \hspace{15pt} \tree{15} & $\bm{b}^TACA\bm{c}$ & 40 \\
    		7 & \hspace{15pt} \tree{7} & $\bm{b}^TA\bm{c}^2$ & 12 & & 16 & \tree{16} \hspace{15pt} & $\bm{b}^TA^2\bm{c}^2$ & 60 \\
    		8 & \tree{8} \hspace{15pt}  & $\bm{b}^TA^2\bm{c}$ & 24 & & 17 & \hspace{15pt} \tree{17} & $\bm{b}^TA^3\bm{c}$ & 120 \nline
		\noalign{\vskip 3pt}    		
    		\cline{1-4}\cline{6-9}
  		\end{tabular}
  \end{smalltrees}
	\label{tab:elementary_weights}
\end{table}

According to Butcher's algebraic theory, irreducible Runge--Kutta methods
are placed in one-to-one correspondence with elements of a group
$G$, consisting of real-valued functions on the set of rooted trees \cite[Theorem~384A]{Butcher2008_book}.
A Runge--Kutta method corresponds to the map that takes each rooted tree $t$
to the corresponding elementary weight $\Phi(t)$ of that Runge--Kutta method.
%Each elementary weight is an expression $\Phi(t)$ on a rooted tree
%$t$ \cite{Butcher2008_book}.
Table~\ref{tab:elementary_weights} lists the elementary weights for trees of
up to degree five; a general recursive formula can be found in
\cite[Definition~312A]{Butcher2008_book}.
The ordering of trees given in Table~\ref{tab:elementary_weights} is used
throughout the remainder of this work; thus $t_9$ refers to the tree with
elementary weight $\bm{b}^T \bm{c}^4$.
For a function $\alpha \in G$ we write the values of the
elementary weights as $\alpha_{i} = \alpha(t_{i})$ for tree $t_{i}$.
%By convention $\alpha_0 = \alpha(t_{0}) = 1$, where $t_{0}$ denotes the empty tree.
%For example $\alpha_1 = \bm{b}^T\bm{e}$, $\alpha_2 = \bm{b}^T\bm{c}$ and so on.
A special element of the group $E \in G$ corresponds to the
(hypothetical) method that evolves the solution exactly.
The values of $E(t)$ are denoted $1/\gamma(t)$ \cite{Butcher2008_book}
%, where $\gamma(t)$ denotes the density of tree $t$ \cite{Butcher2008_book}
and the values of $\gamma(t)$ are included in
Table~\ref{tab:elementary_weights}.
Classical order conditions are obtained by comparing the elementary weights 
of a method with these values.

Let $\alpha, \beta \in G$ correspond to Runge--Kutta methods $M_1$ and $M_2$
respectively.
%A multiplicative group operation $\alpha\beta$ can be defined
The application of method $M_1$ followed by method $M_2$ corresponds to
the multiplicative group operation $\alpha\beta$.\footnote{We write
	$M_2M_1$ to mean the application of $M_1$
	followed by the application of $M_2$
	(following matrix and operator ordering convention)
	but when referring to products of elements of $G$
        we use the reverse ordering ($\alpha\beta$)
	to match the convention in \cite{Butcher2008_book}.}
This product is defined by partitioning the input tree and computing
over the resulting forest \cite[\S~383]{Butcher2008_book}.
%It is expressed by
%\begin{equation}\label{eq:Group_operation}
%       (\alpha\beta)(t) = \sum_{w \lhd t} \biggl(\prod_{v \in t \setminus w} \alpha(v)\beta(w)\biggr),
%\end{equation}
%where $w \lhd t$ indicates a subtree of $t$ which includes the
%root of $t$ and $w \setminus t$ indicates the forest induced
%by removing $w$ from $t$ \cite{Butcher2008_book}.
%Multiplicity in choosing $w$ must also be accounted for.

Two Runge--Kutta methods $M_1$ and $M_2$, are equivalent up to order
$p$ if their corresponding elements in $G$, $\alpha$ and $\beta$, satisfy
$\alpha(t) = \beta(t)$, for every tree $t$ with $r(t) \leq p$,
where $r(t)$ denotes the order of the tree (number of vertices).
We denote this equivalence relation by
$$M_1 \equival{p} M_2.$$
%(although this notation is not used in \cite{Butcher2008_book}.)
In this sense, methods have inverses: the product of $\alpha^{-1}$ and
$\alpha$ must match the identity method up to order $p$.
Note that inverse methods up to order $p$ are not unique and inverse 
methods of explicit methods need not be implicit.
We can then define the effective order of accuracy of a method $M$
with starting method $S$ and stopping method $S^{-1}$. % as $S^{-1}MS \equival{q} E$.
\begin{definition}\cite[\S~389]{Butcher2008_book}\label{def:Effective_order}
  Suppose $M$ is a Runge--Kutta method with corresponding $\alpha \in G$.
  Then the method $M$ is of effective order $q$ if there exist methods
  $S,S^{-1}$ (with corresponding $\beta, \beta^{-1} \in G$) such that
	\begin{equation}\label{eq:Effective_order_1}
		(\beta\alpha\beta^{-1})(t) = E(t), \; \text{for every tree with } r(t) \leq q,
	\end{equation}
        where $\beta^{-1}$ is an inverse of $\beta$ up to order $q$; i.e.
        \begin{align*}
        		(\beta^{-1}\beta)(t) = 1(t), \; \text{for every tree with } r(t) \leq q.
		\end{align*}	        
        Here $1\in G$ is the identity element
        and $E\in G$ is the exact evolution operator.
\end{definition}

\begin{table}
    \caption{Effective order five conditions on $\alpha$ (main
      method $M$) in terms of order conditions on $\beta$
      (starting method $S$).
      See also \cite[\S~389]{Butcher2008_book}.
      Recall that $\alpha_i$ and $\beta_i$
      are the elementary weights associated with the index $i$ in
      Table~\ref{tab:elementary_weights}.
      We assume that $\beta_1=0$ (see
      Section~\ref{subsubsec:Main_starting_conditions}).}
  \small
  \setlength{\extrarowheight}{0.5pt}
  \centering
  %\begin{tabular}{lp{0.37\textwidth}p{0.5\textwidth}}
  \begin{tabular}{p{2mm}p{51mm}p{64mm}}
    \toprule
    $q$  &  Effective order conditions \\
    \mymidrule
    $1$  &
            $\alpha_1  = 1$. \\
    \mydashrule
    $2$  &
            $\alpha_2  = \tfrac{1}{2}$. \\
   \mydashrule
    $3$  &
            $\alpha_3  = \tfrac{1}{3} + 2\beta_2$,  \quad
            $\alpha_4  = \tfrac{1}{6}$.  \\
    \mydashrule
    $4$  &  \multicolumn{2}{l}{%
            $\alpha_5  = \tfrac{1}{4} + 3\beta_2 + 3\beta_3$, \hfill
            $\alpha_6  = \tfrac{1}{8} + \beta_2 + \beta_3 + \beta_4$, \hfill
            $\alpha_7  = \tfrac{1}{12} +\beta_2 - \beta_3 + 2\beta_4$, \hfill
            $\alpha_8  = \tfrac{1}{24}$.} \\
    \mydashrule
    $5$  &
            $\alpha_9  = \tfrac{1}{5} + 4\beta_2 + 6\beta_3 + 4\beta_5$,
         &  \hspace*{-3pt}$\alpha_{10} = \tfrac{1}{10} + \tfrac{5}{3}\beta_2 - 2\beta_2^{2} + \tfrac{5}{2}\beta_3 + \beta_4 + \beta_5 + 2\beta_6$, \nline
         &  $\alpha_{11} = \tfrac{1}{15} + \tfrac{4}{3}\beta_2 + \tfrac{1}{2}\beta_3 + 2\beta_4 + 2\beta_6 + \beta_7$, 
         &  \hspace*{-3pt}$\alpha_{12} = \tfrac{1}{30} + \tfrac{1}{3}\beta_2 - 2\beta_2^{2} + \tfrac{1}{2}\beta_3 + \tfrac{1}{2}\beta_4 + \beta_6 + \beta_8$, \nline
         &  $\alpha_{13} = \tfrac{1}{20} + \tfrac{2}{3}\beta_2 - \beta_2^{2} + \beta_3 + \beta_4 + 2\beta_6$,
         &  \hspace*{-3pt}$\alpha_{14} = \tfrac{1}{20} + \beta_2 + 3\beta_4 - \beta_5 + 3\beta_7$, \nline
         &  $\alpha_{15} = \tfrac{1}{40} + \tfrac{1}{3}\beta_2 + \tfrac{3}{2}\beta_4 - \beta_6 + \beta_7 + \beta_8$,
         &  \hspace*{-3pt}$\alpha_{16} = \tfrac{1}{60} + \tfrac{1}{3}\beta_2 - \tfrac{1}{2}\beta_3 + \beta_4 - \beta_7 + 2\beta_8$, \,\! $\alpha_{17} = \tfrac{1}{120}$. \nline
            \bottomrule
    \end{tabular}
    \label{tab:effective_OCs_on_alpha}
\end{table}

\subsection{Effective order conditions}\label{sec:effOrderCond}
For the main method $M$ to have effective order $q$, its coefficients
and those of the starting and stopping methods must satisfy a set of
algebraic conditions.
These \emph{effective order conditions} can be found
by rewriting \eqref{eq:Effective_order_1} as
$(\beta\alpha)(t) = (E\beta)(t)$  %, for all trees with $r(t) \leq q$
and applying the group product operation.
For trees up to order five these are tabulated in Table~\ref{tab:effective_OCs_on_alpha} (and also in \cite[\S~389]{Butcher2008_book}).
In general, the effective order conditions allow more degrees of
freedom for method design than do the classical order conditions.
Note that the effective order conditions match the classical order conditions up to
second order.
\begin{remark}\label{rem:talltrees}
	The effective order conditions of the main method for the ``tall" trees 
	$t_1, t_2, t_4, t_8, t_{17}, \dots$ match the classical order conditions
	and these are precisely the order conditions for linear problems.
	This follows from inductive application of the group product
	on the tall trees. 
	Therefore, methods of effective order $q$ have classical order at least $q$ 
	for linear problems.
\end{remark}

%Finally, we use the abbreviation RK($s$,$q$,$p$) for an $s$-stage Runge-Kutta method of effective order $q$ and classical order $p$.

\subsubsection{Order conditions of the main and starting methods}\label{subsubsec:Main_starting_conditions}
As recommended in \cite{Butcher2008_book},
we consider the elementary weights $\beta_{i}$ of the starting method as free
parameters when determining the elementary weights $\alpha_i$ of the main method.
The relationship in Table~\ref{tab:effective_OCs_on_alpha} between the
$\alpha_i$ and $\beta_i$ is mostly linear (although there are a few
$\beta_2^2$ terms).
It is thus straightforward to (mostly) isolate the equations for $\alpha_i$
and determine the $\beta_i$ as linear combination of the $\alpha_i$.
This separation provides maximal degrees of freedom and minimizes the number of
constraints when constructing the method $M$.
The resulting effective order conditions for the main method $M$ are given
in Table~\ref{tab:effective_OCs} (up to effective order five).
For a specified classical and effective order, these are the equality constraints
$\Phi(K)$ in the optimization problem \eqref{eq:SSP_opt} for method $M$.

Constructing the main method $M$ then determines the $\alpha$ values
and we obtain a set of order conditions on $\beta$ (for that
particular choice of $M$).  These are given in the right-half of
Table~\ref{tab:effective_OCs}.
%The order conditions for the starting method $S$ are also given in the table.
We can also find the order conditions on $S^{-1}$ in terms of the
$\beta_i$ (see \cite[Table~386(III)]{Butcher2008_book}).
We note that increasing the classical order of the main method requires 
$\alpha_i = 1/\gamma(t_i)$ and thus by Table~\ref{tab:effective_OCs_on_alpha} requires more of the $\beta_i$ to be zero.
%The classical order of the main method $M$ is increased by setting $\beta_i$
%to zero.
%Essentially, for a given effective order $q$ if all $\beta_i$ are zero, then
%the main method has classical order $q$.

\begin{table}
    \caption{Effective order $q$, classical order $p$ conditions on $ \alpha $ and $ \beta $ for the main and starting methods, $M$ and $S$ respectively.}
  \small
  \centering
    \begin{tabular}{M{2mm}M{2mm}M{68mm}M{67mm}}
      \toprule
        $q$ & $p$ & Order conditions for main method $M$ & Order conditions for starting method $S$ \nline
      \midrule
        \multirow{1}{*}{3} & \multirow{1}{*}{2} & {\small $\alpha_1 = 1$, $\alpha_2 = \frac{1}{2}$, $\alpha_4 = \frac{1}{6}$.} & {\small $\beta_1 = 0$, $\beta_2 = - \frac{1}{6} + \frac{1}{2}\alpha_3$.}\nline
      \mydashrule
        \multirow{3}{*}{4} & \multirow{3}{*}{2} & {\small $\alpha_1 = 1$, $\alpha_2 = \frac{1}{2}$, $\alpha_4 = \frac{1}{6}$,} & {\small $\beta_1 = 0$, $\beta_2 = - \frac{1}{6} + \frac{1}{2}\alpha_3$,}\nline
        & & {\small $\frac{1}{4} - \alpha_3 + \alpha_5 - 2\alpha_6 + \alpha_7 = 0$, $\alpha_8 = \frac{1}{24}$.} & {\small $\beta_3 = \frac{1}{12} - \frac{1}{2}\alpha_3 + \frac{1}{3}\alpha_5$, $\beta_4 = - \frac{1}{24} - \frac{1}{3}\alpha_5 + \alpha_6$.} \nline
      \mydashrule
        \multirow{3}{*}{4} & \multirow{3}{*}{3} & {\small $\alpha_1 = 1$, $\alpha_2 = \frac{1}{2}$, $\alpha_3 = \frac{1}{3}$, $\alpha_4 = \frac{1}{6}$,} & {\small $\beta_1 = 0$, $\beta_2 = 0$, $\beta_3 = - \frac{1}{12}  + \frac{1}{3}\alpha_5$,} \nline
        & & {\small $\frac{1}{12} - \alpha_5 + 2\alpha_6 - \alpha_7 = 0$, $\alpha_8 = \frac{1}{24}$.} & {\small $\beta_4 = - \frac{1}{24} - \frac{1}{3}\alpha_5 + \alpha_6$.} \nline
      \mydashrule
        \multirow{8}{*}{5} & \multirow{8}{*}{2} & {\small $\alpha_1 = 1$, $\alpha_2 = \frac{1}{2}$, $\alpha_4 = \frac{1}{6}$, $\alpha_8 = \frac{1}{24}$, $\alpha_{17} = \frac{1}{120}$,} & {\small $\beta_1 = 0$, $\beta_2 = - \frac{1}{6} + \frac{1}{2}\alpha_3$,} \nline
        & & {\small $\frac{1}{4} - \alpha_3 + \alpha_5 - 2\alpha_6 + \alpha_7 = 0$,} & {\small $\beta_3 = \frac{1}{12} - \frac{1}{2}\alpha_3 + \frac{1}{3}\alpha_5$, $\beta_4 = -\frac{1}{24} - \frac{1}{3}\alpha_5 + \alpha_6$} \nline
        & & {\small $\frac{1}{4}\alpha_9-\alpha_{10}+\alpha_{13}=\beta_2^{2}$, \: $\beta_2 = - \frac{1}{6} + \frac{1}{2}\alpha_3$,} & {\small $\beta_5 = -\frac{1}{120} + \frac{1}{4}\alpha_3 - \frac{1}{2}\alpha_5 + \frac{1}{4}\alpha_9$,} \nline
        & & {\small $\frac{3}{10} - \frac{3}{2}\alpha_3 + \alpha_5 + \frac{1}{2}\alpha_9 - 3\alpha_{10} + 3\alpha_{11} - \alpha_{14} = 6\beta_2^{2}$,} & {\small $\beta_6 = \frac{7}{720} + \beta_2^{2} + \frac{1}{12}\alpha_3 - \frac{1}{2}\alpha_6 - \frac{1}{8}\alpha_9 + \frac{1}{2}\alpha_{10}$,} \nline
        & & {\small $\frac{1}{15} - \frac{1}{2}\alpha_3 + \alpha_6 + \frac{1}{2}\alpha_9 - 2\alpha_{10} + \alpha_{11} + \alpha_{12} - \alpha_{15} = 2\beta_2^{2}$,} & {\small $\beta_7 = \frac{8}{45} - 2\beta_2^{2} - \frac{7}{12}\alpha_3 + \frac{1}{2}\alpha_5 - \alpha_6 + \frac{1}{4}\alpha_9 - \alpha_{10} + \alpha_{11}$,} \nline
        & & {\small $\frac{19}{60} - \alpha_3 + \alpha_5 - 2\alpha_6 + \alpha_{11} - 2\alpha_{12} + \alpha_{16} = 4\beta_2^{2}$.} & {\small $\beta_8 = -\frac{1}{120} + \beta_2^{2} + \frac{1}{8}\alpha_9 - \frac{1}{2}\alpha_{10} + \alpha_{12}$.} \nline
      \mydashrule
        \multirow{7}{*}{5} & \multirow{7}{*}{3} & {\small $\alpha_1 = 1$, $\alpha_2 = \frac{1}{2}$, $\alpha_3 = \frac{1}{3}$, $\alpha_4 = \frac{1}{6}$, $\alpha_8 = \frac{1}{24}$,} & {\small $\beta_1 = 0$, $\beta_2 = 0$, $\beta_3 = -\frac{1}{12} + \frac{1}{3}\alpha_5$} \nline
        & & {\small $\alpha_{17} = \frac{1}{120}$, $\frac{1}{12} - \alpha_5 + 2\alpha_6 - \alpha_7 = 0$,} & {\small $\beta_4 = -\frac{1}{24} - \frac{1}{3}\alpha_5 + \alpha_6$,} \nline
        & & {\small $\frac{1}{4}\alpha_9 - \alpha_{10} + \alpha_{13} = 0$,} & {\small $\beta_5 = \frac{3}{40} - \frac{1}{2}\alpha_5 + \frac{1}{4}\alpha_9$,} \nline
        & & {\small $\frac{1}{5} - \alpha_5 - \frac{1}{2}\alpha_9 + 3\alpha_{10} - 3\alpha_{11} + \alpha_{14} = 0$,} & {\small $\beta_6 = \frac{3}{80} - \frac{1}{2}\alpha_6 - \frac{1}{8}\alpha_9 + \frac{1}{2}\alpha_{10}$,} \nline
        & & {\small $\frac{1}{10} - \alpha_6 - \frac{1}{2}\alpha_9 + 2\alpha_{10} - \alpha_{11} - \alpha_{12} + \alpha_{15} = 0$,} & {\small $\beta_7 = -\frac{1}{60} + \frac{1}{2}\alpha_5 - \alpha_6 + \frac{1}{4}\alpha_9 - \alpha_{10} + \alpha_{11}$,} \nline
        & & {\small $\frac{1}{60} - \alpha_5 + 2\alpha_6 - \alpha_{11} + 2\alpha_{12} - \alpha_{16} = 0$.} & {\small $\beta_8 = -\frac{1}{120} + \frac{1}{8}\alpha_9 - \frac{1}{2}\alpha_{10} + \alpha_{12}$.} \nline
      \mydashrule
        \multirow{7}{*}{5} & \multirow{7}{*}{4} & {\small $\alpha_1 = 1$, $\alpha_2 = \frac{1}{2}$, $\alpha_3 = \frac{1}{3}$, $\alpha_4 = \frac{1}{6}$, $\alpha_5 = \frac{1}{4}$,} & {\small $\beta_1 = 0$, $\beta_2 = 0$,} \nline
        & & {\small $\alpha_6 = \frac{1}{8}$, $\alpha_7 = \frac{1}{12}$, $\alpha_8 = \frac{1}{24}$, $\alpha_{17} = \frac{1}{120}$,} & {\small $\beta_3 = 0$, $\beta_4 = 0$,} \nline
        & & {\small $\frac{1}{4}\alpha_9 - \alpha_{10} + \alpha_{13} = 0$,} & {\small $\beta_5 = -\frac{1}{20} + \frac{1}{4}\alpha_9$,} \nline
        & & {\small $\frac{1}{20} + \frac{1}{2}\alpha_9 - 3\alpha_{10} + 3\alpha_{11} - \alpha_{14} = 0$,} & {\small $\beta_6 = -\frac{1}{40} - \frac{1}{8}\alpha_9 + \frac{1}{2}\alpha_{10}$,} \nline
        & & {\small $\frac{1}{40} + \frac{1}{2}\alpha_9 - 2\alpha_{10} + \alpha_{11} + \alpha_{12} - \alpha_{15} = 0$,} & {\small $\beta_7 = -\frac{1}{60} + \frac{1}{4}\alpha_9 - \alpha_{10} + \alpha_{11}$,} \nline
        & & {\small $\frac{1}{60} - \alpha_{11} + 2\alpha_{12} - \alpha_{16} = 0$.} & {\small $\beta_8 = -\frac{1}{120} + \frac{1}{8}\alpha_9 - \frac{1}{2}\alpha_{10} + \alpha_{12}$.} \nline
        \bottomrule
    \end{tabular}
    \label{tab:effective_OCs}
\end{table}

Tables~\ref{tab:effective_OCs_on_alpha} and~\ref{tab:effective_OCs}
both assume that $\beta_1=0$ (i.e., the starting and stopping methods
perturb the solution but do not advance the solution in time).  This
assumption is without loss of generality following \cite[Lemma
389A]{Butcher2008_book}, the proof of which shows that we can always find 
starting procedures with $\beta_1 = 0$ for which the main method has 
effective order $q$, whenever this holds for a starting method with 
$\beta_1 \neq 0$.
%the proof of which shows that if a method $M$
%has effective order $p$ with particular starting and stopping methods
%(for which $\beta_1 \neq 0$), then $M$ is also effective order $p$
%with another equivalent pair of starting and stopping methods which do have
%$\beta_1=0$.

        \section{Explicit SSP Runge--Kutta methods have effective order at most four}\label{sec:ExRK_barrier}
The classical order of any explicit SSP Runge--Kutta method cannot be greater
than four \cite{Ruuth2002}.
It turns out that the effective order of any explicit SSP Runge--Kutta method 
also cannot be greater than four, although the proof of this result is more
involved.
We begin by recalling a well-known result.
\begin{lemma}\label{lem:positive_b}(see \cite[Theorem~4.2]{Kraaijevanger1991},\cite[Lemma 4.2]{Ruuth2002})
	Any irreducible Runge--Kutta method with positive SSP coefficient $\sspcoef>0$
	must have positive weights $\bm{b} > \bm{0}$.
\end{lemma}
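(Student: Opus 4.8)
The plan is to work from the algebraic characterization of the SSP coefficient as the radius of absolute monotonicity. Because the method is irreducible this radius coincides with $\mathcal C$, so the conditions \eqref{eq:absmon} hold at $r=\mathcal C>0$; moreover, by the standard fact that the set of admissible $r$ for \eqref{eq:absmon} is an interval with supremum $\mathcal C$ (see \cite{Kraaijevanger1991}), these conditions also hold for every $r\in(0,\mathcal C]$. Since $\det(I+rA)$ is a polynomial in $r$ with value $1$ at $r=0$, the matrix $I+rA$ is invertible for all small $r>0$ and $(I+rA)^{-1}\to I$ as $r\to 0^+$; letting $r\to 0^+$ in the componentwise inequality $K(I+rA)^{-1}\ge 0$ from \eqref{eq:absmon} therefore yields $A\ge 0$ and $\bm b\ge\bm 0$. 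This settles the non-strict inequality, and it remains to promote it to $\bm b>\bm 0$ using irreducibility.

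For the strict inequality I would argue by contradiction. Suppose $b_k=0$ for some $k$, and put $T:=\{\,j:b_j=0\,\}$ and $S:=\{1,\dots,s\}\setminus T$; both sets are nonempty ($k\in T$, while $\sum_i b_i=1$ forces $S\ne\emptyset$). For each $r\in(0,\mathcal C]$ set $\bm\beta(r)^{\texttt{T}}:=\bm b^{\texttt{T}}(I+rA)^{-1}\ge\bm 0$, so that $\bm b^{\texttt{T}}=\bm\beta(r)^{\texttt{T}}(I+rA)$, i.e. $b_j=\beta_j(r)+r\sum_i\beta_i(r)a_{ij}$ for every $j$. When $j\in T$ the left-hand side is zero while each summand on the right is nonnegative (using $A\ge 0$, $\bm\beta(r)\ge\bm 0$, $r>0$), hence $\beta_i(r)\,a_{ij}=0$ for all $i$ and all $r\in(0,\mathcal C]$. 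Now suppose further, for a contradiction, that $a_{i_0 j_0}>0$ for some $i_0\in S$ and $j_0\in T$; then $\beta_{i_0}(r)=0$ throughout $(0,\mathcal C]$, and letting $r\to 0^+$ gives $b_{i_0}=\lim_{r\to 0^+}\beta_{i_0}(r)=0$, contradicting $i_0\in S$. Hence $a_{ij}=0$ for all $i\in S$ and $j\in T$.

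Combining $a_{ij}=0$ for $i\in S,\ j\in T$ with $b_j=0$ for $j\in T$ shows that the stages indexed by $T$ influence neither the other stages nor the update: deleting them leaves an $|S|\times|S|$ Runge--Kutta method producing exactly the same solution $\bm u^{n+1}$, and since $T\ne\emptyset$ this is a genuine (DJ-)reduction. This contradicts irreducibility, so $T=\emptyset$ and $\bm b>\bm 0$. I expect the delicate point to be this last reducibility step — making sure the extracted partition $\{1,\dots,s\}=S\sqcup T$ really is a reducing partition in the exact sense used in the definition of irreducibility — together with citing cleanly the interval property of \eqref{eq:absmon} that lets one use the absolute-monotonicity conditions over the whole range $(0,\mathcal C]$, and the associated continuity of $(I+rA)^{-1}$ near $r=0$, rather than only the single value $r=\mathcal C$.
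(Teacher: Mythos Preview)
The paper does not give its own proof of this lemma: it is stated as a ``well-known result'' with citations to \cite[Theorem~4.2]{Kraaijevanger1991} and \cite[Lemma~4.2]{Ruuth2002}, and is then used as a black box in the proof of Theorem~\ref{thm:effective_barrier}. So there is nothing in the paper to compare against line by line.

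Your argument is correct and is essentially the standard one found in those references: pass from $\sspcoef>0$ to the absolute-monotonicity inequalities \eqref{eq:absmon} on the whole interval $(0,\sspcoef]$, take $r\to 0^+$ to obtain $A\ge 0$ and $\bm b\ge \bm 0$, and then use a DJ-reducibility argument to upgrade $\bm b\ge \bm 0$ to $\bm b>\bm 0$. Your identification of the two places requiring care is also accurate. The interval property and the continuity of $(I+rA)^{-1}$ near $r=0$ are exactly what Kraaijevanger's framework supplies, and the partition $S\sqcup T$ you construct, with $b_j=0$ for $j\in T$ and $a_{ij}=0$ for $i\in S$, $j\in T$, is precisely a DJ-reducing partition. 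One small point: you invoke $\sum_i b_i=1$ to guarantee $S\ne\emptyset$; this is the first-order consistency condition and is implicit throughout the paper (cf.\ $\alpha_1=1$ in Table~\ref{tab:elementary_weights}), so it is harmless here, but strictly speaking the lemma as stated does not assume it and one could instead argue that $\bm b=\bm 0$ already forces reducibility.
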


Irreducibility \cite{dahlquist2006} is technically important in this
result and those that follow because a reducible SSP method might not
have positive weights (but it would be reducible to one that does, as
per the lemma).
The main result of this section is:
%to prove the non-existence of explicit SSP Runge-Kutta of effective order more than four,
%as stated in Theorem~\ref{the:no_SSP_5}. 
\begin{theorem}\label{thm:effective_barrier}
	Any explicit Runge--Kutta method with positive weights $\bm{b} > \bm{0}$ 
	has effective order at most four.
\end{theorem}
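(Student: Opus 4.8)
The plan is a proof by contradiction. Since a Runge--Kutta method of effective order greater than five a fortiori satisfies the effective order five conditions, it suffices to show that no explicit method with $\bm{b}>\bm{0}$ can satisfy them. So suppose $M$ is such a method, with Butcher tableau $(A,\bm{b},\bm{c})$ and corresponding $\alpha\in G$; by Definition~\ref{def:Effective_order} there is a starting method with elementary weights $\beta$, and by \cite[Lemma~389A]{Butcher2008_book} we may take $\beta_1=0$, such that the effective order five conditions of Table~\ref{tab:effective_OCs_on_alpha} hold. The first step is to \emph{eliminate the $\beta_i$}: the order-three line forces $\beta_2=\tfrac12\alpha_3-\tfrac16$, and (depending on the classical order $p\in\{2,3,4,5\}$ of $M$) further $\beta_i$ are pinned down; substituting these into the order-four and order-five lines of Table~\ref{tab:effective_OCs} converts the effective order conditions into a system of polynomial equations in the $\alpha_i$ alone, i.e.\ in the entries of $(A,\bm{b},\bm{c})$. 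Besides the five ``tall-tree'' linear conditions $\bm{b}^{T}A^{k}\bm{e}=1/(k+1)!$ for $k=0,\dots,4$ (cf.\ Remark~\ref{rem:talltrees}), one is left with a handful of genuinely nonlinear constraints, several carrying a $\beta_2^{2}=(\tfrac12\alpha_3-\tfrac16)^{2}$ term that survives the substitution. Note that the linear conditions by themselves are \emph{not} contradictory --- they are met by optimal linearly fifth-order SSP methods --- so the nonlinear ones are essential.

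The second step is to derive a contradiction from this reduced system together with $\bm{b}>\bm{0}$ and the strict lower-triangularity of $A$ (so that $c_1=0$, the components of $A^{k}\bm{e}$ with index $\le k$ vanish, and $A^{s}=0$). Here I would adapt the technique behind the classical order barriers for explicit SSP methods \cite{Kraaijevanger1991,Ruuth2002}: one looks for a linear combination of the reduced order conditions that can be rewritten, after simplification with the linear conditions, as a sum $\sum_i b_i\,p_i^{2}$ for fixed polynomials $p_i$ in the quantities $c_i,(A\bm{c})_i,(A^{2}\bm{c})_i,\dots$, while the right-hand sides force that same combination to be strictly negative; positivity of $\bm{b}$ then yields $0\le\sum_i b_i p_i^{2}<0$, a contradiction. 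Because the classical order-five barrier for SSP methods also exploits $A\ge\bm{0}$, which is \emph{not} available here, the combination must be arranged so that entries of $A$ appear only inside squares or only through the linear order conditions; I expect this to require treating the cases $p=2,3,4$ (and the degenerate $p=5$, where all $\beta_i=0$ and the argument reduces to the classical positive-weight obstruction) separately, since the surviving constraints differ across the rows of Table~\ref{tab:effective_OCs}.

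The main obstacle is exactly this algebraic infeasibility certificate: locating the right linear combination and the squared quantities $p_i$ for a system that is both larger and messier than in the classical case, and doing so without the crutch of nonnegative $A$. A secondary nuisance is the $\beta_2^{2}$ terms --- after substitution these are quartic in $\alpha_3$, so the contradiction is not a purely affine consequence of the order conditions; it is cleanest to keep $\beta_2$ as an auxiliary unknown subject to $\beta_2=\tfrac12\alpha_3-\tfrac16$, build the nonnegative combination treating $\beta_2$ as free, and eliminate it only at the very end.
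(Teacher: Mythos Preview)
Your overall strategy---a convexity/sum-of-squares argument exploiting $\bm{b}>\bm{0}$ and $\bm{b}^T\bm{e}=1$---is exactly the right idea, and your closing remark about keeping $\beta_2$ as an auxiliary unknown rather than eliminating it is spot on. But you are making the execution far harder than it needs to be, and as written the plan has a gap: you never identify the actual combination that yields the contradiction, and you anticipate a case split on $p\in\{2,3,4,5\}$ that is unnecessary.

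The paper's argument is much shorter. Since effective order five forces classical order at least two, it suffices to work only with the $q=5$, $p=2$ row of Table~\ref{tab:effective_OCs}; the higher-$p$ rows are subsumed. From that row just four conditions are used:
\[
\bm{b}^T\bm{e}=1,\qquad \bm{b}^TA\bm{c}=\tfrac{1}{6},\qquad \tfrac{1}{2}\bm{b}^T\bm{c}^2-\tfrac{1}{6}=\beta_2,\qquad \tfrac{1}{4}\alpha_9-\alpha_{10}+\alpha_{13}=\beta_2^{2}.
\]
Setting $\bm{v}=\tfrac{1}{2}\bm{c}^2-A\bm{c}$ (componentwise), the last two conditions become simply $\bm{b}^T\bm{v}=\beta_2$ and $\bm{b}^T\bm{v}^2=\beta_2^{2}$. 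Since $\bm{b}$ is a strictly positive probability vector, Jensen's inequality gives $\bm{b}^T\bm{v}^2\ge(\bm{b}^T\bm{v})^2$ with equality only when all $v_i$ are equal; but $v_1=0$ for an explicit method, so $\bm{v}=\bm{0}$. That is stage order two, which is impossible for explicit Runge--Kutta methods. This is the entire proof.

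Two points where your plan diverges from what actually works. First, you look for a combination forced to be \emph{strictly negative}; the paper instead lands on the \emph{equality case} of Jensen and derives a structural impossibility (stage order two), so the contradiction is not of the form $0\le\sum b_ip_i^2<0$. Second, your proposed elimination of all the $\beta_i$ and the ensuing case analysis by $p$ is wasted effort: only $\beta_2$ is relevant, it is best left symbolic, and the single case $p=2$ covers everything.
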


%The proof of Theorem~\ref{thm:effective_barrier} is deferred to
%the end of this section.

%We can now prove Theorem~\ref{thm:effective_barrier}:
\begin{proof}%[Proof of Theorem~\ref{thm:effective_barrier}]
	Any method of effective order five must have classical order at least two
	(see \cite{Butcher2008_book} or Table~\ref{tab:effective_OCs}).
    Thus it is sufficient to show that any method with all positive weights
    cannot satisfy the conditions of effective order five and classical order two.

    Let $(A,\bm{b},\bm{c})$ denote the coefficients of an explicit Runge--Kutta method with
    effective order at least five, classical order at least two, and positive 
    weights $\bm{b} > \bm{0}$.
    The effective order five and classical order two conditions
    (see Table~\ref{tab:effective_OCs} with $q=5$ and $p=2$) include the following:
    \begin{subequations}\label{eq:theorem_cond}
    		\begin{align}
    			\bm{b}^T\bm{e} & = 1, \label{eq:theorem_cond_a} \\
             	\bm{b}^TA\bm{c} &= \frac{1}{6}, \label{eq:theorem_cond_b} \\
            	\frac{1}{2}\bm{b}^T\bm{c}^2 - \frac{1}{6} &= \beta_2, \label{eq:theorem_cond_c} \\
            	\frac{1}{4}\bm{b}^T\bm{c}^4 - \bm{b}^TC^2A\bm{c} + \bm{b}^T(A\bm{c})^2 &= \beta_2^2, \label{eq:theorem_cond_d}
        	\end{align}
	\end{subequations}
	where the powers on vectors are understood component-wise. 
	Let
	%\begin{align*}
		$\bm{v} = \frac{1}{2}\bm{c}^2 - A\bm{c}$.
	%\end{align*}
	Then substituting \eqref{eq:theorem_cond_b} into \eqref{eq:theorem_cond_c} and expressing \eqref{eq:theorem_cond_d} in terms of $\bm{v}$ gives
        %\begin{subequations}
	\begin{align*}
		\bm{b}^T\bm{v} &= \beta_2, \\% \label{eq:btv} \\
        %\end{equation}
	%Also, \eqref{eq:theorem_cond_d} can be expressed as
	%\begin{equation}
		\bm{b}^T\bm{v}^2 &= \beta_2^2. % \label{eq:btv2}
	\end{align*}
        %\end{subequations}
  Each of %\eqref{eq:btv} and \eqref{eq:btv2}
  these is a strictly convex combination.
  % (by \eqref{eq:theorem_cond_a} and $\bm{b} > 0$).
  Jensen's inequality (with a strictly convex function, as is the case with the
  square function here) then states $\bm{b}^T \bm{v}^2 \leq (\bm{b}^T
  \bm{v})^2$ with equality if and only if all components of $\bm{v}$ are equal
  %$v_i = \mu, i=1,\ldots,n$
  \cite[Theorem 12, pg 31]{Bullen:inequalities}.
  %Thus all components of $\bm{v}$ are equal.
  Now $v_1 = 0$ for every explicit method so we deduce that $\bm{v}=\bm{0}$.
  That implies the method has stage order two, which is not possible for
  explicit methods \cite{Ruuth2002}.
  % \cite{Hairer1991_book}.
  % Ruuth2002 is cited here but this is a well-known result.
  This contradiction completes the proof.
\end{proof}

\begin{corollary}\label{cor:no_SSP_5}
    Let $M$ denote an irreducible explicit Runge--Kutta method with $\sspcoef>0$.
    Then $M$ has effective order at most four.
\end{corollary}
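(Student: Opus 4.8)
The plan is to obtain this as an immediate consequence of the two results already proved in this section, so the "proof" is really just a short chaining argument. First I would observe that the hypotheses of Lemma~\ref{lem:positive_b} are exactly met: $M$ is irreducible and has positive SSP coefficient $\sspcoef>0$, hence its weight vector satisfies $\bm{b}>\bm{0}$. Second, $M$ is an explicit Runge--Kutta method with all positive weights, so Theorem~\ref{thm:effective_barrier} applies directly and tells us that $M$ has effective order at most four. Composing the two implications ($\sspcoef>0$ and irreducible $\Rightarrow$ $\bm{b}>\bm{0}$ $\Rightarrow$ effective order $\le 4$) yields the claim.

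The only thing worth remarking on is the role of irreducibility, which I would flag explicitly rather than grind through: it is needed so that Lemma~\ref{lem:positive_b} can be invoked to guarantee $\bm{b}>\bm{0}$. A reducible SSP method need not have positive weights, but in that case one would first reduce it to an irreducible method with the same SSP coefficient and apply the above; since the corollary is stated only for irreducible $M$, no such reduction step is actually required here.

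Accordingly, I do not expect any genuine obstacle: all of the analytic content (the use of Jensen's inequality on the convex combinations $\bm{b}^T\bm{v}=\beta_2$ and $\bm{b}^T\bm{v}^2=\beta_2^2$, and the impossibility of stage order two for explicit methods) has already been absorbed into the proof of Theorem~\ref{thm:effective_barrier}. The corollary is simply the translation of that theorem from the hypothesis "positive weights" to the more familiar hypothesis "positive SSP coefficient," using Lemma~\ref{lem:positive_b} as the bridge.
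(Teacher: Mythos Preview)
Your proposal is correct and matches the paper's own proof exactly: the corollary is deduced immediately from Lemma~\ref{lem:positive_b} (irreducible with $\sspcoef>0$ implies $\bm{b}>\bm{0}$) followed by Theorem~\ref{thm:effective_barrier} (explicit with $\bm{b}>\bm{0}$ implies effective order at most four). Your remarks on irreducibility and on where the analytic content lies are accurate but go beyond what the paper records; the paper simply states that the result follows immediately from those two results.
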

\begin{proof}
	This follows immediately from Lemma~\ref{lem:positive_b} and 
	Theorem~\ref{thm:effective_barrier}.
\end{proof}

\begin{remark}
    It is worth noting here an additional result that 
    follows directly from what we have proved.
    Using Theorem~\ref{thm:effective_barrier} and \cite[Theorem~4.1]{dahlquist2006}, 
    it follows that any irreducible explicit Runge--Kutta method with positive radius of
    circle contractivity has effective order at most four.
\end{remark}

        \section{Optimal explicit SSP Runge--Kutta schemes with maximal effective order}\label{sec:optimal_ESSPRK}
In this section, we use the SSP theory and Butcher's theory of effective
order (Sections \ref{sec:SSP} and \ref{sec:Algebraic_RK}) to find
optimal explicit SSP Runge--Kutta schemes with prescribed effective
order and classical order.
According to Corollary~\ref{cor:no_SSP_5}, there are no explicit SSPRK methods of
effective order five, and therefore we need only consider methods with
effective order up to four.

Recall from Section~\ref{sec:Algebraic_RK} that the methods with
an effective order of accuracy involve a main method $M$ as well as starting and
stopping methods $S$ and $S^{-1}$.
In Section~\ref{subsec:starting_stopping} we introduce a novel approach
to construction of starting and stopping methods in order to allow
them to be SSP.

We denote by ESSPRK($s,q,p$) an $s$-stage explicit SSP Runge--Kutta
method of effective order $q$ and classical order $p$.
Also we write SSPRK($s,q$) for an $s$-stage explicit SSP Runge--Kutta
method of order $q$.

\subsection{The main method}\label{subsec:main_method}

Our search is carried out in two
steps, first searching for optimal main methods $M$ and then for
possible corresponding methods $S$ and $S^{-1}$.
For a given number of stages, effective order, and classical order,
our aim is thus to find an optimal main method, meaning one with the 
largest possible SSP coefficient $\sspcoef$.

To find a method ESSPRK($s,q,p$) with Butcher tableau $(A, \bm{b},
\bm{c})$, we consider the optimization problem~\eqref{eq:SSP_opt} 
with $\Phi(K)$ representing the conditions for effective order
$q$ and classical order $p$ (as per Table~\ref{tab:effective_OCs}).
The methods are found through numerical search, using 
\textsc{Matlab}'s optimization toolbox.
Specifically, we use \texttt{fmincon} with a sequential quadratic 
programming approach \cite{Ketcheson2008, Ketcheson/Macdonald/Gottlieb:2009}.
This process does not guarantee a global minimizer, so many searches 
from random initial guesses are performed to help
find methods with the largest possible SSP coefficients.
%ensure the method with the largest possible SSP coefficient is found.

\subsubsection{Optimal SSP coefficients}\label{subsubsec:optimal_SSP_coeff}
Useful bounds on the optimal SSP coefficient can be obtained 
by considering an important relaxation. 
In the relaxed problem, the method is required to be accurate and strong 
stability preserving only for linear, constant-coefficient initial value problems. 
This leads to a reduced set of order conditions and a relaxed absolute 
monotonicity condition \cite{Kraaijevanger1986,Ketcheson2008,ketcheson2009a}.
We denote the maximal SSP coefficient for linear problems
(maximized over all methods with order $q$ and $s$ stages) by $\clin$.

Let $\sspcoef_{s,q}$ denote the maximal SSP coefficient (relevant to
non-linear problems) over all methods of $s$ stages with order $q$.  Let
$\sspcoef_{s,q,p}$ denote the object of our study, i.e. the maximal SSP
coefficient (relevant to non-linear problems) over all methods of $s$ stages
with effective order $q$ and classical order $p$.
From Remark~\ref{rem:talltrees} and the fact that the ESSPRK($s,q,p$) methods
form a super class of the SSPRK($s,q$) methods, we have
\begin{align} \label{ineq:clin}
        \sspcoef_{s,q} \le \sspcoef_{s,q,p} \le\clin.
\end{align}

The effective SSP coefficients for methods with up to eleven stages are shown in 
Table~\ref{tab:eff_SSP_coeff}.
Recall from Section~\ref{sec:ExRK_barrier} that $q=5$ implies a zero
SSP coefficient and from Section~\ref{sec:Algebraic_RK} that for
$q=1,2$, the class of explicit Runge--Kutta methods with effective order $q$
is the simply the class of explicit Runge--Kutta methods with order $q$.  % (of orders one and two)
Therefore we consider only methods of effective order $q=3$ and $q=4$.
Exact optimal values of $\clin$ are known for many classes of methods; for
example see \cite{Kraaijevanger1986,Ketcheson2008,ketcheson2009a}.
Those results and \eqref{ineq:clin} allow us to determine the optimal value
of $\sspcoef_{s,q,p}$ {\em a priori} for the cases $q=3$ (for any $s$) and
for $q=4,s=10$, since in those cases we have $\sspcoef_{s,q}=\clin$.

\begin{table}
\caption{Effective SSP coefficients $ \ceff = \sspcoef/s$ of the best known
      %explicit effective order
      ESSPRK($s,q,p$) methods.
    		Entries in bold achieve the bound $\clin$ given by the linear SSP coefficient and are therefore optimal. 
    		If no positive $\sspcoef$ can be found, we use ``$-$'' to indicate non-existence. 
    		The optimal fourth-order linear SSP coefficients are $\sspcoef^{\textnormal{lin}}_{4,4}=0.25$,
    		$\sspcoef^{\textnormal{lin}}_{5,4}=0.40$ and $\sspcoef^{\textnormal{lin}}_{6,4}=0.44$.}
    \centering
    \begin{tabular}{ccccccccccccc}
        \toprule
        %\multicolumn{2}{|c|}{\backslashbox{\hspace{2pt}\vspace{1pt}$q\,,\,p$}{\vspace{-5.5pt}$s$}} & $1$ & $2$ & $3$ & $4$ & $5$ & $6$ & $7$ & $8$ & $9$ & $10$ & $11$ \\
        \multirow{2}{*}{$q$} &
        \multirow{2}{*}{$p$}
            & \multicolumn{11}{c}{stages $s$} \\
            \cmidrule{3-13}
& & $1$ & $2$ & $3$ & $4$ & $5$ & $6$ & $7$ & $8$ & $9$ & $10$ & $11$ \\
        \midrule
        $3$ & $2$ & $-$ & $-$ & $\bf 0.33$ & $\bf 0.50$ & $\bf 0.53$ & $\bf 0.59$ & $\bf 0.61$ & $\bf 0.64$ & $\bf 0.67$ & $\bf 0.68$ & $\bf 0.69$\\
        %\hline
        $4$ & $2$ & $-$ & $-$ & $-$ & $0.22$ & $0.39$ & $\bf 0.44$ & $\bf 0.50$ & $\bf 0.54$ & $\bf 0.57$ & $\bf 0.60$ & $\bf 0.62$ \\
        %\hline
        $4$ & $3$ & $-$ & $-$ & $-$ & $0.19$ & $0.37$ & $0.43$ & $\bf 0.50$ & $\bf 0.54$ & $\bf 0.57$ & $\bf 0.60$ & $\bf 0.62$ \\
        \bottomrule
    \end{tabular}
    \label{tab:eff_SSP_coeff}
\end{table}

\subsubsection{Effective order three methods}\label{subsubsec:3rd_ESSPRK}
Since $\sspcoef_{s,q}=\clin$ for $q=3$, the optimal effective order three methods
have SSP coefficients equal to the corresponding optimal classical order three methods.
In the cases of three and four stages, we are able to determine exact coefficients for
families of optimal methods of effective order three.
\begin{theorem}\label{thm:ESSPRK(3,3,2)}
	A family of optimal three-stage, effective order three SSP Runge--Kutta 
	methods of classical order two, with SSP coefficient $\sspcoef_{3,3,2} = 1$, is given by
    \begin{displaymath}
    		\begin{split}
    			\bm{Y}_1 &= \bm{u}^n, \\
    			\bm{Y}_2 &= \bm{u}^n + \Dt\bm{F}(\bm{Y}_1), \\
    			\bm{Y}_3 &= \bm{u}^n + \gamma\Dt\bm{F}(\bm{Y}_1) + \gamma\Dt\bm{F}(\bm{Y}_2), \\
    			\bm{u}^{n+1} &= \bm{u}^n + \frac{5\gamma-1}{6\gamma}\Dt\bm{F}(\bm{Y}_1) + \frac{1}{6}\Dt\bm{F}(\bm{Y}_2) + \frac{1}{6\gamma}\Dt\bm{F}(\bm{Y}_3),
        \end{split}
    \end{displaymath}
    where $1/4 \leq \gamma \leq 1$ is a free parameter.
    %We refer to the above family as ESSPRK($3,3,2$).
    %\davidtodo{This is a bit strange, since that notation was defined to refer to a single method.}
\end{theorem}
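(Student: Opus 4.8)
The plan is to verify directly that the stated one-parameter family is a genuine explicit Runge--Kutta method of effective order three and classical order two, and that its SSP coefficient equals $1$, which is optimal since $\sspcoef_{3,3,2}\le\clin_{\;3,3}=1$. First I would read off the Butcher tableau: from the stage equations, $A$ has nonzero entries $a_{21}=1$, $a_{31}=a_{32}=\gamma$, so $\bm{c}=(0,1,2\gamma)^T$, and $\bm{b}=\bigl(\tfrac{5\gamma-1}{6\gamma},\tfrac16,\tfrac1{6\gamma}\bigr)^T$. Then I would check the order conditions from Table~\ref{tab:effective_OCs} for $q=3$, $p=2$, namely $\alpha_1=\bm{b}^T\bm{e}=1$, $\alpha_2=\bm{b}^T\bm{c}=\tfrac12$, $\alpha_4=\bm{b}^TA\bm{c}=\tfrac16$, together with the starting-method relations $\beta_1=0$ and $\beta_2=-\tfrac16+\tfrac12\alpha_3$ where $\alpha_3=\bm{b}^T\bm{c}^2$. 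The first three are short computations in $\gamma$ that should reduce to identities; the $\beta$ conditions merely \emph{define} $\beta_2$ in terms of the (now determined) value of $\alpha_3$, so they impose nothing further on $M$ — I would just record $\alpha_3 = \tfrac13 + \tfrac{2\gamma-1}{3}\cdot(\text{something})$ and the induced $\beta_2$. Note that the classical third-order condition $\alpha_3=\tfrac13$ should \emph{fail} for generic $\gamma$ (consistent with the method having classical order only two), which is worth remarking.

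Next I would establish the SSP coefficient. The cleanest route is the algebraic characterization \eqref{eq:absmon}: show that $r=1$ satisfies $K(I+A)^{-1}\ge0$ and $\bm{e}_{s+1}-K(I+A)^{-1}\bm{e}_s\ge0$ componentwise, and that no larger $r$ can work. For the lower bound, since the method is explicit, $(I+A)^{-1}$ is lower triangular and easy to invert by hand for $s=3$; I would compute $K(I+A)^{-1}$ explicitly and check each entry is nonnegative precisely when $\tfrac14\le\gamma\le1$ (this is presumably where the parameter range comes from — the endpoints should correspond to some entry of $K(I+rA)^{-1}$ or of $\bm{e}_{s+1}-rK(I+rA)^{-1}\bm{e}_s$ vanishing at $r=1$). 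Alternatively, and perhaps more transparently, I would exhibit the Shu--Osher form: writing each stage and the update as a convex combination of forward-Euler steps $\bm{Y}_i + \Dt\bm{F}(\bm{Y}_i)$ with nonnegative coefficients summing appropriately, the SSP coefficient is the reciprocal of the largest ratio of an off-diagonal $A$-coefficient to the corresponding convex weight; a direct inspection of the tableau shows this reciprocal is $1$ exactly on $1/4\le\gamma\le1$. For the upper bound $\sspcoef_{3,3,2}\le1$, I would simply cite \eqref{ineq:clin} together with the known value $\clin_{\;3,3}=1$ from \cite{Kraaijevanger1986,Ketcheson2008}, so that $\sspcoef=1$ is both attained and optimal.

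Finally, I would remark that optimality of the \emph{whole family} (not merely of the SSP coefficient) is immediate: every member has $\sspcoef=1=\clin_{\;3,3}$, so each is an optimal ESSPRK$(3,3,2)$ method, and the family is genuinely one-parameter since distinct $\gamma$ give distinct $\bm{c}$. If desired, one can also note that the starting and stopping methods required to realize effective order three exist because $\beta_2$ is the only nontrivial constraint and it is satisfied by construction; a concrete SSP choice of $S,S^{-1}$ is deferred to Section~\ref{subsec:starting_stopping}.

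\textbf{Main obstacle.} I expect the only real work is pinning down \emph{why} the admissible interval is exactly $[1/4,1]$: this requires carefully tracking which componentwise inequality in \eqref{eq:absmon} (equivalently, which Shu--Osher coefficient) becomes binding at each endpoint, and confirming that for $\gamma$ outside $[1/4,1]$ either some coefficient goes negative (destroying the convex-combination structure, hence $\sspcoef<1$) or the method loses positivity of $\bm{b}$ (e.g. $b_1=\tfrac{5\gamma-1}{6\gamma}<0$ when $\gamma<1/5$, and $b_1$ exceeds what the SSP bound allows as $\gamma\to1$). The algebra is elementary but must be done with care to get the sharp endpoints rather than a loose sufficient range.
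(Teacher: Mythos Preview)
Your proposal is correct and follows essentially the same approach as the paper: the paper's proof simply states that feasibility (the order conditions and the SSP conditions \eqref{eq:absmon} for the family) can be verified by direct calculation, and that optimality follows from $\sspcoef_{s,3,2}=\clin$. You have spelled out in considerably more detail what that direct verification entails---including where the parameter range $[1/4,1]$ arises---but the structure of the argument is identical.
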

\begin{theorem}\label{thm:ESSPRK(4,3,2)}
	A family of optimal four-stage, effective order three SSP Runge--Kutta 
	methods of classical order two, with SSP coefficient $\sspcoef_{4,3,2} = 2$ is given by
    \begin{displaymath}
    		\begin{split}
    			\bm{Y}_1 &= \bm{u}^n, \\
    			\bm{Y}_2 &= \bm{u}^n + \frac{1}{2}\Dt\bm{F}(\bm{Y}_1), \\
    			\bm{Y}_3 &= \bm{u}^n + \frac{1}{2}\Dt\bm{F}(\bm{Y}_1) + \frac{1}{2}\Dt\bm{F}(\bm{Y}_2), \\
    			\bm{Y}_4 &= \bm{u}^n + \gamma\Dt\bm{F}(\bm{Y}_1) + \gamma\Dt\bm{F}(\bm{Y}_2) + + \gamma\Dt\bm{F}(\bm{Y}_3), \\
    			\bm{u}^{n+1} &= \bm{u}^n + \frac{8\gamma-1}{12\gamma}\Dt\bm{F}(\bm{Y}_1) + \frac{1}{6}\Dt\bm{F}(\bm{Y}_2) + \frac{1}{6}\Dt\bm{F}(\bm{Y}_3) + \frac{1}{12\gamma}\Dt\bm{F}(\bm{Y}_4),
        \end{split}
    \end{displaymath}
    where $ 1/6 \leq \gamma \leq 1/2 $ is a free parameter.
    %We refer to the above family as ESSPRK($4,3,2$).
\end{theorem}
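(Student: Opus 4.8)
The plan is: extract the Butcher array from the stated Shu--Osher-type form, verify the handful of order conditions, compute the radius of absolute monotonicity directly from \eqref{eq:absmon} and show it equals $2$, and finally appeal to the linear SSP bound for optimality.

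First I would record that the method has $s=4$ stages with strictly lower-triangular coefficient matrix given by $a_{21}=a_{31}=a_{32}=\tfrac12$ and $a_{41}=a_{42}=a_{43}=\gamma$, abscissae $\bm c=(0,\tfrac12,1,3\gamma)^T$, and weights $\bm b=\bigl(\tfrac{8\gamma-1}{12\gamma},\,\tfrac16,\,\tfrac16,\,\tfrac1{12\gamma}\bigr)^T$; on the interval $1/6\le\gamma\le1/2$ one has $\bm b>\bm 0$ (only $\gamma>1/8$ is needed), consistent with Lemma~\ref{lem:positive_b}. Next, by Table~\ref{tab:effective_OCs} (row $q=3$, $p=2$) the effective-order-three, classical-order-two conditions on the main method are exactly $\bm b^T\bm e=1$, $\bm b^T\bm c=\tfrac12$, and $\bm b^TA\bm c=\tfrac16$; each of these is a one-line substitution. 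The corresponding value of $\beta_2$ is then fixed, and a starting method $S$ with $\beta_1=0$ (and a stopping method $S^{-1}$) realizing it exists, as discussed after Table~\ref{tab:effective_OCs} and in Section~\ref{subsec:starting_stopping}; this establishes the effective order. (One also notes $\bm b^T\bm c^2=\tfrac16+\tfrac{3\gamma}{4}$, so the classical order is exactly two except at the single value $\gamma=\tfrac29$, where it happens to rise to three; this does not affect the statement.)

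The core of the argument is the SSP coefficient. Since $A$ is strictly lower triangular, $I+rA$ is unit lower triangular, hence invertible for all $r$, and $(I+rA)^{-1}=\sum_{k\ge0}(-r)^kA^k$ terminates; I would write out $(I+rA)^{-1}$, the product $K(I+rA)^{-1}$, and $(I+rA)^{-1}\bm e$ explicitly. The $(3,1)$ entry of $A(I+rA)^{-1}$ equals $\tfrac12-\tfrac r4$, which is negative for $r>2$, so the radius of absolute monotonicity is at most $2$. For the reverse inequality one checks that, for $0\le r\le2$ and $\gamma$ in the stated range, every entry of $K(I+rA)^{-1}$ is $\ge0$ and $\bm e_5-rK(I+rA)^{-1}\bm e_4\ge\bm 0$. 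The structure that makes this painless: (i) apart from two inequalities, each quantity involved is automatically nonnegative (a nonnegative multiple of $(1-\tfrac r2)^2$, or of the form $g(r)\le g(2)$ with $g$ monotone on $[0,2]$) or reduces, by monotonicity in $r$, to its already-trivial value at $r=2$; the two that remain nontrivial at $r=2$ collapse to $6\gamma-1\ge0$ and $1-2\gamma\ge0$ --- precisely $1/6\le\gamma\le1/2$; and (ii) the one ``stopping''-type inequality coming from the $\bm b^T$ row is $1-r\,\bm b^T(I+rA)^{-1}\bm e=R(-r)$, where $R(z)=1+z+\tfrac{z^2}{2}+\tfrac{z^3}{6}+\tfrac{z^4}{48}$ is the method's stability polynomial (its $z^4$ coefficient is $\bm b^TA^3\bm e=\tfrac1{48}$, independent of $\gamma$); since $\tfrac{d^2}{dr^2}R(-r)=(1-\tfrac r2)^2\ge0$ one gets $\tfrac{d}{dr}R(-r)<0$ on $[0,2]$, so $R(-r)$ decreases from $1$ to $0$ there and stays nonnegative. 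Hence $\sspcoef=2$ for every admissible $\gamma$.

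Optimality is then immediate from \eqref{ineq:clin}: $\sspcoef_{4,3,2}\le\sspcoef^{\textnormal{lin}}_{4,3}=2$ (see \cite{Kraaijevanger1986} and the discussion in Section~\ref{subsubsec:optimal_SSP_coeff}), so the family above, attaining $\sspcoef=2$, is optimal --- in fact $\sspcoef_{4,3}=\sspcoef_{4,3,2}=\sspcoef^{\textnormal{lin}}_{4,3}=2$. I expect the only real labor to be the bookkeeping in step (i) --- writing down all entries of the $5\times4$ matrix $K(I+rA)^{-1}$ together with its row sums and confirming each stays nonnegative on $[0,2]$ --- but the recurring factor $(1-\tfrac r2)^2$ keeps this routine rather than delicate.
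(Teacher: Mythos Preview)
Your approach is essentially that of the paper: verify feasibility (the order conditions from Table~\ref{tab:effective_OCs} and the absolute monotonicity conditions \eqref{eq:absmon}) by direct calculation, then invoke $\sspcoef^{\textnormal{lin}}_{4,3}=2$ via \eqref{ineq:clin} for optimality. The paper's proof is a two-line remark to exactly this effect; your proposal simply fills in the computational details, and the structure you describe (the recurring factor $(1-\tfrac r2)^2$, the stability polynomial $R(z)=1+z+\tfrac{z^2}{2}+\tfrac{z^3}{6}+\tfrac{z^4}{48}$ independent of $\gamma$, and the two endpoint inequalities $6\gamma-1\ge0$, $1-2\gamma\ge0$) is correct.

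One minor slip in your parenthetical aside: $\bm b^T\bm c^2=\tfrac{5}{24}+\tfrac{3\gamma}{4}$, not $\tfrac16+\tfrac{3\gamma}{4}$, so the classical order rises to three at $\gamma=\tfrac16$ rather than $\tfrac29$; this is consistent with the paper's remark that $\gamma=\tfrac16$ recovers the usual SSPRK($4,3$) method. This does not affect the main argument.
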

\begin{proof}
	In either theorem, feasibility can be verified by direct calculation of the 
	conditions in problem~\eqref{eq:SSP_opt}. Optimality follows because 
	$\sspcoef_{s,3,2} = \sspcoef^{\textnormal{lin}}_{s,3}$.
\end{proof}

Theorem~\ref{thm:ESSPRK(3,3,2)} gives a \emph{family} of three-stage 
methods. 
The particular value of $\gamma = 1/4$ corresponds to the classical
Shu--Osher SSPRK($3,3$) method \cite{Gottlieb/Shu:1998}.
Similarly, in Theorem~\ref{thm:ESSPRK(4,3,2)} the particular value of 
$\gamma = 1/6$ corresponds to the usual SSPRK($4,3$) method.
It seems possible that for each number of stages, the 
ESSPRK($s, 3, 2$) methods may form a family in which an optimal 
SSPRK($s$, $3$) method is a particular member. 

\subsubsection{Effective order four methods}\label{subsubsec:4th_ESSPRK}
The ESSPRK($s,4,p$) methods can have classical order $p=2$ or $3$.
%These methods have an SSP coefficient $\sspcoef$ which is at least as
%large as SSPRK($s, 4$) (in fact, they are all larger except in the
%case where $s =10$ where they are equal).\davidtodo{See my comment above.}
In either case, for stages $7 \le s \le 11$ the methods found are
optimal because the SSP coefficient attains the upper bound of
$\clin$.
For fewer stages, the new methods still have SSP coefficients up to
30\% larger than that of explicit SSPRK($s,q$) methods.
% the SSP coefficient of ESSPRK($s,4,2$) and
%ESSPRK($s,4,3$) are the same and both achieve the linear bound
%$\clin$.
%ESSPRK($6,4,2$) is also optimal.
In the particular case of four-stage methods we have the following:% result:
\begin{remark}
	In contrast with the non-existence of an SSPRK(4,\,4) method 
	\cite{Gottlieb/Shu:1998,Ruuth2002}, 
	we are able to find ESSPRK(4,\,4,\,2) and ESSPRK(4,\,4,\,3) methods.
	The coefficients of these methods are found in
	Tables~\ref{tab:ESSPRK(4,4,2)_scheme}
	and~\ref{tab:ESSPRK(4,4,3)_scheme}.
\end{remark}
% Inside italics, notation looked funny with upright numerals, so
% I removed math mode (and added "\," for some extra space)

Additionally, we find two families of methods with effective order four, 
for which $\ceff$ asymptotically approaches unity.
The families consist of second order methods with $s = n^2+1$ stages and 
SSP coefficient $\sspcoef_{s,4,2} = n^2-n$.
They are optimal since $\sspcoef_{s,4,2}  = \sspcoef^{\textnormal{lin}}_{s,4}$ 
\cite[Theorem~5.2(c)]{Kraaijevanger1986}.
It is convenient to express the coefficients in the modified Shu--Osher form
\cite{Gottlieb2011a}
\begin{align*}
	\bm{Y}_i &= v_i\bm{u}^n + \sum_{j=1}^{i-1}\bigl(\alpha_{ij}\bm{Y}_j + \Dt\beta_{ij}\bm{F}(\bm{Y}_j)\bigr), \; 1 \leq i \leq s+1 \\
	\bm{u}^{n+1} &= \bm{Y}_{s+1},
\end{align*}
because of the sparsity of the matrices $\alpha, \beta \in \mathbb{R}^{(s+1)\times s}$
and vector $\bm{v} \in \mathbb{R}^s$.
For $n \geq 3$ the non-zero elements are given by
\begin{align*}
	v_1 &= 1, \quad\quad v_{n^2+2} = \frac{2}{(n^2+1)\bigl((n-1)^2+1\bigr)}, \\
	\alpha_{n^2-2n+4,(n-2)^2} &= \frac{n^2-1 \pm \sqrt{n^3-3n^2+n+1}}{4n^2-6n+2}, \\ 		
	\alpha_{n^2+2,n^2+1} &= \frac{n(n-1)^2}{(2n-1)(n^2+1)(1-\alpha_{n^2-2n+4,(n-2)^2})}, \\
	\alpha_{n^2+2,n^2-2n+2} &= 1 - v_{n^2+2} - \alpha_{n^2+2,n^2+1}, \\
	\alpha_{i+1,i} &= \begin{cases} 
								1 - \alpha_{i+1,(n-2)^2}, & i = n^2-2n+3 \\
								1,  &\mbox{otherwise,}
							\end{cases}
\end{align*}
where $ 1 \leq i \leq n^2$ and 
\begin{align*}
	\beta_{i,j} & = \frac{\alpha_{i,j}}{n^2-n}, \quad 1 \leq i \leq n^2+2, \;\; 1 \leq j \leq n^2+1.
\end{align*}
%\textbf{\red TODO: what is the purpose of $\beta_{n^2 + 2, 1} := 0$?  Isn't the appropriate $\alpha$ also zero?}
In \cite[\S~6.2.2]{Gottlieb2011a}, a similar pattern was found
for SSPRK($s,3$) methods.
%\textbf{\red TODO: i.e., some more info: I don't have the book in front of me.}

\subsection{Starting and stopping methods}\label{subsec:starting_stopping}
Provided an ESSPRK($s,q,p$) scheme that can be used as the main 
method $M$, we want to find perturbation methods $S$ and $S^{-1}$ such that the 
Runge--Kutta scheme $S^{-1}MS$ attains classical order $q$, equal to the 
effective order of method $M$.
We also want the resulting overall process to be SSP.
%If method $S$ advances the solution in time, then $S^{-1}$ must evolve the
%solution backward in time, which may be undesirable in some cases.
%For this reason, starting and stopping methods $S, S^{-1}$ are usually
%designed to perturb the solution but not advance in time.
%In any case, the resulting
However at least one of the $S$ and $S^{-1}$ methods is not SSP:
if $\beta_1 = 0$ then $\sum_i b_i = 0$ implies the presence of at
least one negative weight and thus neither scheme can be SSP.
Even if we consider methods with $\beta_1 \neq 0$, one of $S$ or
$S^{-1}$ must step backwards and thus that method cannot be SSP
(unless we consider the downwind operator
\cite{Ruuth2004,Gottlieb/Ruuth:SSPfastdownwind,Ketcheson:2011:downwind}).

In order to overcome this problem and achieve ``bona fide''
SSPRK methods with an effective order of accuracy, we need to choose different starting and stopping methods. 
We consider methods $R$ and $T$ which each take a positive step such that 
$R \equival{q} MS$ and $T \equival{q} S^{-1}M$.
That is, the order conditions of $R$ and $T$ must match those of
$MS$ and $S^{-1}M$, respectively, up to order $q$.
This gives a new $TM^{n-2}R$ scheme which is equivalent up to order $q$ 
to the $S^{-1}M^nS$ scheme and attains classical order $q$.
Each starting and stopping procedure now takes a positive step forward
in time.

To derive order conditions for the $R$ and $T$ methods, consider their
corresponding functions in group $G$ to be $\rho$ and $\tau$
respectively.
Then the equivalence is expressed as
\begin{equation} \label{eq:R_T_OCs}
    \rho(t) = (\beta\alpha)(t) \text{ and } \tau(t) = (\alpha\beta^{-1})(t), \quad \text{for all 
    trees $t$ with $r(t) \leq q$.}
\end{equation}
%and equivalently,
%\begin{equation} \label{eq:RMT_OCs}
%    \rho\alpha\tau(t) = E^3(t), \quad \text{for all trees $t$ with $r(t) \leq q$.}
%\end{equation}
Rewriting the second condition in \eqref{eq:R_T_OCs} as 
$(\tau\beta)(t) = \alpha(t)$, the order conditions for the starting and stopping 
methods can be determined by the usual product formula and are given in Table~\ref{tab:rho_tau_OCs}.
These conditions could be constructed more generally but here we have
assumed $\beta_1=0$ (see Section~\ref{subsubsec:Main_starting_conditions}); this
will be sufficient for constructing SSP starting and stopping
conditions.

\begin{table}
  	\caption{Order conditions on $\rho$ and $\tau$ up to effective order four for starting
  		and stopping methods $R$ and $T$, respectively.
  		The upper block represents the effective order three conditions.
         As in Table~\ref{tab:effective_OCs_on_alpha} and Table~\ref{tab:effective_OCs} we assume 
         $\beta_1 = 0$.}
	\centering
	\begin{tabular}{lcl}
		\toprule
    		%\multicolumn{1}{c}{$\rho(t) = (\beta\alpha)(t)$} & & \multicolumn{1}{c}{$\tau(t) = (\alpha\beta^{-1})(t)$} \\
    		$\rho(t) = (\beta\alpha)(t)$ & & $\tau(t) = (\alpha\beta^{-1})(t)$ \\
    		\midrule
    		 $\rho_1 = \alpha_1$ & & $\tau_1 = \alpha_1$ \\
    		$\rho_2 = \alpha_2 + \beta_2$ & & $\tau_2 = \alpha_2 - \beta_2$ \\
    		$\rho_3 = \alpha_3 + \beta_3$ & & $\tau_3 = \alpha_3 - 2\alpha_1\beta_2 - \beta_3$ \\
    		$\rho_4 = \alpha_4 + \alpha_1\beta_2 + \beta_4$ & & $\tau_4 = \alpha_4 - \alpha_1\beta_2 - \beta_4$ \\
                \mydashrule
		$\rho_5 = \alpha_5 + \beta_5$ & & $\tau_5 = \alpha_5 - 3\alpha_1^2\beta_2 - 3\alpha_1\beta_3 - \beta_5$ \\
		$\rho_6 = \alpha_6 + \alpha_2\beta_2 + \beta_6$ & & $\tau_6 = \alpha_6 - (\alpha_1^2 + \alpha_2 -\beta_2)\beta_2 -\alpha_1\beta_3 - \alpha_1\beta_4 - \beta_6$ \\
		$\rho_7 = \alpha_7 + \alpha_1\beta_3 + \beta_7$ & & $\tau_7 = \alpha_7 - 2\alpha_1\beta_4 - \alpha_1^2\beta_2 - \beta_7$ \\
		$\rho_8 = \alpha_8 + \alpha_1\beta_4 + \alpha_2\beta_2 + \beta_8$ & & $\tau_8 = \alpha_8 - \alpha_1\beta_4 - \alpha_2\beta_2 + \beta_2^2 -  \beta_8$
                \\
                \bottomrule
  	\end{tabular}
  	\label{tab:rho_tau_OCs}
\end{table}

\subsubsection{Optimizing the starting and stopping methods}\label{subsubsec:opt_methods}
It turns out that the order conditions from \eqref{eq:R_T_OCs} do not
contradict the SSP requirements.
We can thus find methods $R$ and $T$ using the optimization procedure
described in Section~\ref{subsec:Optimal_SSPRK} with the order conditions 
given by Table~\ref{tab:rho_tau_OCs} for $\Phi(K)$ in \eqref{eq:SSP_opt}.

The values of $\alpha_i$ are determined by the main method $M$.
Also note that for effective order $q$, the algebraic expressions on
$\beta$ up to order $q-1$ are already found by the optimization procedure of 
the main method (see Table~\ref{tab:effective_OCs}). 
However, the values of the order $q$ elementary weights on $\beta$ are not 
known; these are $\beta_3$ and $\beta_4$ for effective order three and
$\beta_5$, $\beta_6$, $\beta_7$ and $\beta_8$ for effective order four.
From Table~\ref{tab:rho_tau_OCs}, we see that both the $R$ and $T$
methods depend on these parameters.
Our approach is to optimize for both methods at once: we solve a
modified version of the optimization problem \eqref{eq:SSP_opt} where
we simultaneously maximize both SSP coefficients subject to the
constraints given in \eqref{eq:R_T_OCs} and conditions on $\beta$ given by 
Table~\ref{tab:effective_OCs}. 
The unknown elementary weights on $\beta$ are used as free parameters.
In practice, we maximize the objective function $\min(r_1,r_2)$, where $r_1$ 
and $r_2$ are the radii of absolute monotonicity of the methods $R$ and $T$.

We were able to construct starting and stopping schemes for each main 
method, with an SSP coefficient at least as large as that of the main method.
This allows the usage of a uniform time-step $\Dt \leq \sspcoef\DtFE$, 
where $\sspcoef$ is the SSP coefficient of the main method.
The additional computational cost
of the starting and stopping methods is minimal:
for methods $R$ and $T$ associated with an $s$-stage main method,
at most $s + 1$ and $s$ stages, respectively, appear to be required.
%In some stages better results are obtained, for example if the main method is 
%ESSPRK($3,3,2$), then its relative starting and stopping methods have only three 
%stages.
%\begin{table}
%    \centering
%	\centering
%	\begin{tabular}{ccc}
%          $q$ & $p$ & number of stages each for $R$ and $T$\\
%          \hline
%          3 & 2 & s+1 ??? \\
%          4 & 2 & s+1??? \\
%          4 & 3 & s+2???
%  	\end{tabular}
%        \qquad
%    \begin{tabular}{|c|c|cccccccc|}
%        \hline
%        \multicolumn{2}{|c|}{\backslashbox{\hspace{1pt}\vspace{1pt}$p$}{\vspace{-5pt}$s$}} & $4$ & $5$ & $6$ & $7$ & $8$ & $9$ & $10$ & $11$ \\
%        \hline
%        \multirow{2}{*}{$2$} & $R$ & $5$ & $7$ & $7$ & $8$ & $9$ & $10$ & $11$ & $12$ \\
%        & $T$ & $5$ & $5$ & $6$ & $7$ & $8$ & $9$ & $10$ & $11$ \\
%        \hline
%        \multirow{2}{*}{$3$}& $R$ & $5$ & $6$ & $7$ & $8$ & $9$ & $10$ & $11$ & $12$ \\
%        & $T$ & $5$ & $5$ & $7$ & $8$ & $8$ & $10$ & $11$ & $12$ \\
%        \hline
%    \end{tabular}
%    \caption{Minimum number of stages required for the starting and
%          stopping method $R$ and $T$ for each ESSPRK($s,4,p$)
%          main method.
%          \textbf{TODO: double check these, then decide which table we want.}
%        }
%    \label{tab:RT_stages}
%\end{table}
Tables \ref{tab:ESSPRK(4,4,2)_scheme} and \ref{tab:ESSPRK(4,4,3)_scheme} 
show the coefficients of the schemes where the main 
method is ESSPRK($4,4,2$) and ESSPRK($4,4,3$), respectively. 

It is important to note that in practice, if accurate values are needed at 
any time other than the final time, the computation must invoke the 
stopping method to obtain them.  Furthermore, changing step-size would require first applying the stopping method with the old step-size and then applying the starting method with the new step-size.

%, and then the starting method to start up again if the stepsize where to change.

\begin{table}
    \caption{ESSPRK(4,4,2): an effective order four SSPRK method with
      four stages and classical order two with its associated starting
      and stopping methods.}
    \setlength{\tabcolsep}{2pt}
    %\centering
    \footnotesize
    \subfloat[Main method $M$, ESSPRK($4,4,2$) \label{ESSPRK(4,4,2)_scheme_a}]{
        \begin{tabular}{c | c c c c}
             $0$ & & & & \\
             $0.730429885783319$ & $0.730429885783319$ & & & \\
             $0.644964638145795$ & $0.251830917810810$ & $0.393133720334985$ & & \\
             $1.000000000000000$ & $0.141062771617064$ & $0.220213358584678$ & $0.638723869798257$ & \\
             \hline
             & $0.384422161080494$ & $0.261154113377550$ & $0.127250689937518$ & $0.227173035604438$
        \end{tabular}
    }\\
    \subfloat[Starting method $R$ \label{ESSPRK(4,4,2)_scheme_b}]{
        \begin{tabular}{c | c c c c c}
			$0$ & & & & & \\
			$0.545722177514735$ & $0.545722177514735$ & & & & \\
			$0.842931687441527$ & $0.366499989048164$ & $0.476431698393363$ & & & \\
			$0.574760809487828$ & $0.135697968350722$ & $0.176400587890242$ & $0.262662253246864$ & & \\
			$0.980872743236632$ & $0.103648417776838$ & $0.134737771331049$ & $0.200625899485633$ & $0.541860654643112$ & \\		
             \hline
             & $0.233699169638954$ & $0.294263351266422$ & $0.065226988215286$ & $0.176168374199685$ & $0.230642116679654$ 
        \end{tabular}        
    }\\
    \subfloat[Stopping method $T$ \label{ESSPRK(4,4,2)_scheme_c}]{
        \begin{tabular}{c | c c c c}
			$0$ & & & & \\
			$0.509877496215340$ & $0.509877496215340$ & & & \\
			$0.435774135529007$ & $0.182230305923759$ & $0.253543829605247$ & & \\
			$0.933203341300203$ & $0.148498121305090$ & $0.206610981494095$ & $0.578094238501017$ & \\
             \hline            
             & $0.307865440399752$ & $0.171863794704750$ & $0.233603236964822$ & $0.286667527930676$
        \end{tabular}
    }
    \label{tab:ESSPRK(4,4,2)_scheme}
\end{table}

\begin{table}
    \caption{ESSPRK(4,4,3): an effective order four SSPRK method with
      four stages and classical order three with its associated starting
      and stopping methods.}
    \setlength{\tabcolsep}{2pt}
    %\centering
    \footnotesize
    \subfloat[Main method $M$, ESSPRK($4,4,3$) \label{ESSPRK(4,4,3)_scheme_a}]{
        \begin{tabular}{c | c c c c}
             $0$ & & & & \\
             $0.601245068769724$ & $0.601245068769724$ & & & \\
             $0.436888719886063$ & $0.139346829159954$ & $0.297541890726109$ & & \\
             $0.747760163757110$ & $0.060555450075478$ & $0.129301708677891$ & $0.557903005003740$ & \\
             \hline
             & $0.220532078662434$ & $0.180572397883936$ & $0.181420582644840$ & $0.417474940808790$
        \end{tabular}  
    }\\
    \subfloat[Starting method $R$ \label{ESSPRK(4,4,3)_scheme_b}]{
        \begin{tabular}{c | c c c c c}
			$0$ &  & & & & \\
			$0.438463764036947$ & $0.438463764036947$ & & & & \\
			$0.639336395725557$ & $0.213665532574654$ & $0.425670863150903$ & & & \\
			$0.434353425654020$ &$0.061345094040860$ & $0.122213530726218$ & $0.250794800886942$ & & \\
			$0.843416464962307$ & $0.039559973266996$ & $0.078812561688700$ & $0.161731525131914$ & $0.563312404874697$ & \\			
             \hline
             & $0.154373542967849$ & $0.307547588471376$ & $0.054439037790856$ & $0.189611674483496$ & $0.294028156286422$
        \end{tabular}
    }\\
    \subfloat[Stopping method $T$ \label{ESSPRK(4,4,3)_scheme_c}]{
        \begin{tabular}{c | c c c c}
			$0$ & & & & \\
			$0.556337718891090$ & $0.556337718891090$ & & & \\
			$0.428870688216872$ & $0.166867537553458$ & $0.262003150663414$ & & \\
			$0.815008947642716$ & $0.104422177204659$ & $0.163956032598547$ & $0.546630737839510$ & \\		
             \hline
             & $0.203508169408374$ & $0.096469758967330$ & $0.321630956102914$ & $0.378391115521382$
        \end{tabular}
    }
    \label{tab:ESSPRK(4,4,3)_scheme}
\end{table}

        \section{Numerical experiments}\label{sec:numerics}
Having constructed strong stability preserving $TM^{n-2}R$ schemes in the previous
section, we now numerically verify their properties.
%where $M$ is an ESSPRK($s,q,p$) scheme and
%$R$ and $T$ are its associated SSP starting and stopping schemes from
%Section~\ref{subsec:starting_stopping}.
Specifically, we use a convergence study to show that the procedure
attains order of accuracy $q$, the effective order of $M$.
We also demonstrate on Burgers' equation that the
SSP coefficient accurately measures the maximal time-step for which the
methods are strong stability preserving.

\subsection{Convergence study}\label{subsec:convergence}
We consider the van der Pol system \cite{Hairer1987_book}
\begin{equation}\label{eq:conv_eq}
	\begin{split}
    		u_1'(t) &= u_2(t), \\
                u_2'(t) &= \mu \bigl(1 - u_1^2(t)\bigr)u_2(t) - u_1(t),
    \end{split}
\end{equation}
over the time interval $t \in [0, 50]$ with $\mu = 2$ and initial
values $u_1(0) = 2$ and $u_2(0) = 1$.
The reference solution for the convergence study is calculated by \textsc{Matlab}'s 
\texttt{ode45} solver with relative and absolute tolerances set to $10^{-13}$.

We solve the initial value problem \eqref{eq:conv_eq}
using SSP $TM^{n-2}R$ schemes.
% where $M$ is effective order three and four and classical order two.
The solution is computed using $n = 100 \cdot 2^{k}$ time steps for
$k = 2, \dots, 7$.
The error at $t = 50$ with respect to time-step is shown in 
Figure~\ref{fig:conv_study} on a logarithmic scale.
\begin{figure}
	\centering
     \subfloat[ESSPRK($s,3,2$)]{\label{fig:conv_study_a}
     \includegraphics[width=0.45\textwidth]{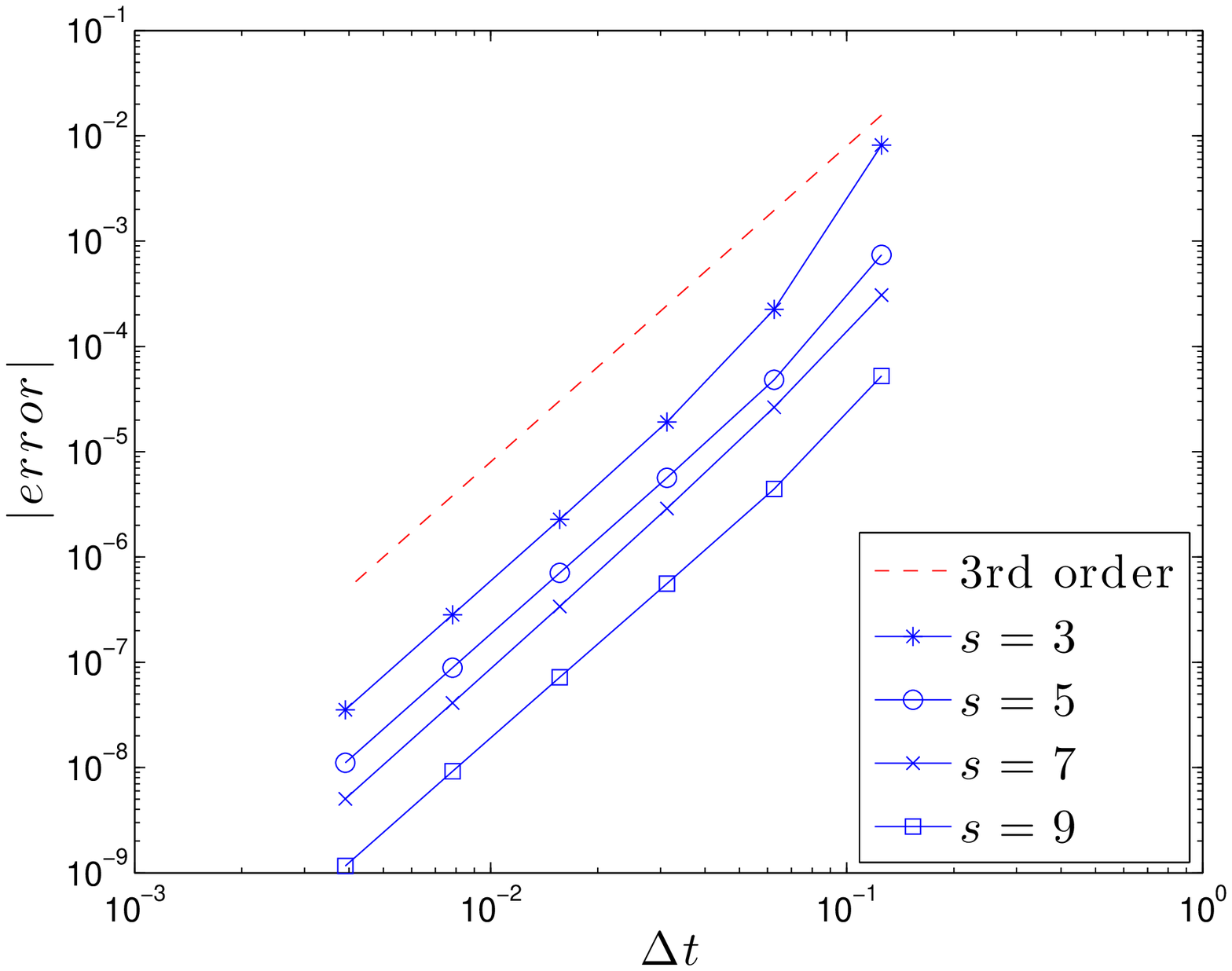}}
   \qquad
     \subfloat[ESSPRK($s,4,2$)]{\label{fig:conv_study_b}
    \includegraphics[width=0.45\textwidth]{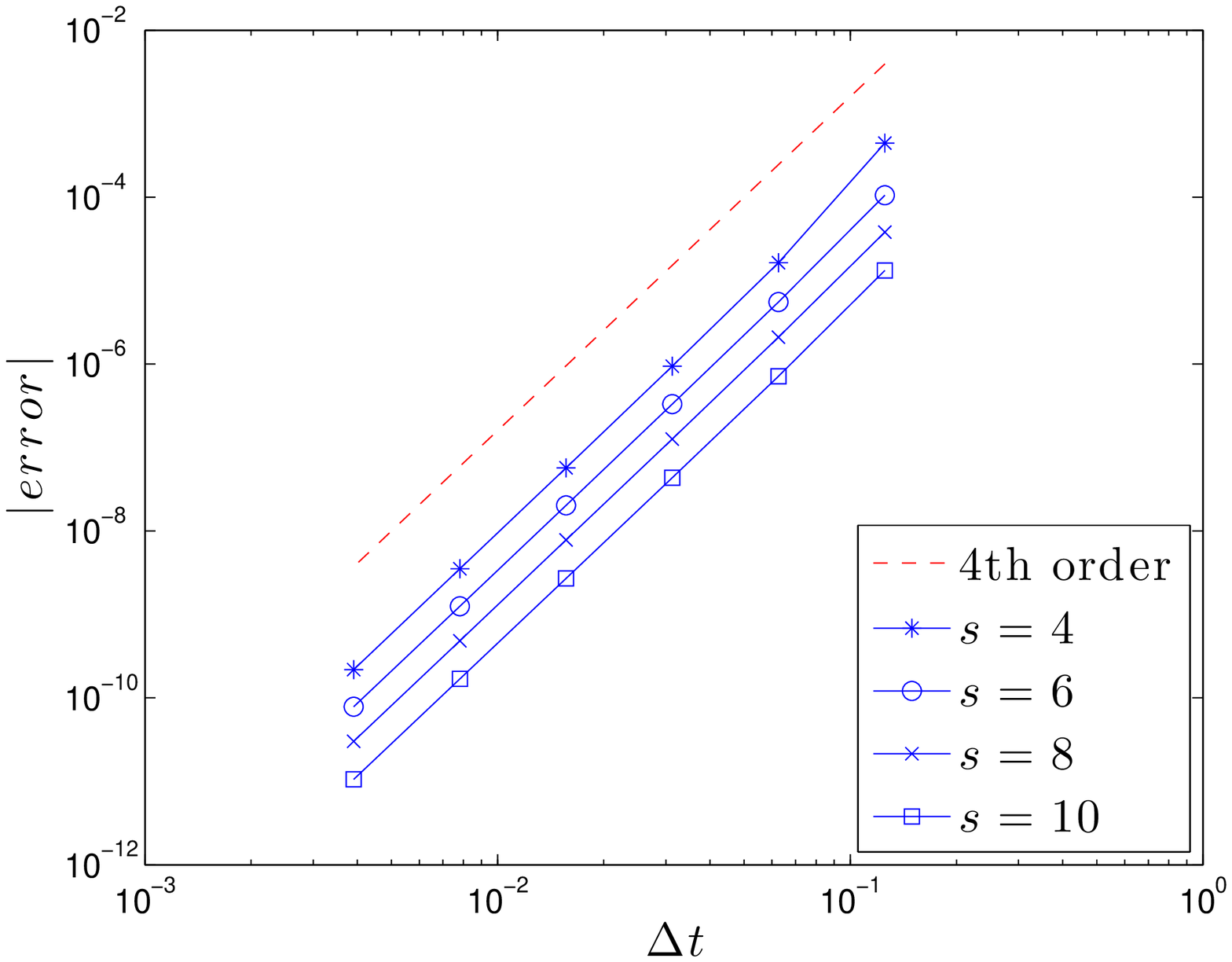}}
    \caption{Convergence study for the van der Pol system using $TM^{n-2}R$ 
    Runge--Kutta schemes when (a) $M$
    is an ESSPRK($s,3,2$) method and (b) $ M $ is an ESSPRK($s,4,2$) method.}
    \label{fig:conv_study}
\end{figure}
The convergence study is performed for $TM^{n-2}R$ schemes with
various number of stages $s$ and the results show that the schemes
attain an order of accuracy equal to the effective order of their main
method $M$.
It is important in doing this sort of convergence study that the
effective order of accuracy can only be obtained after the stopping method is
applied.
Intermediate steps will typically only be order $p$ accurate (the classical
order of the main method).
Finally, we note that the methods with more stages generally exhibit smaller errors
(for a given step size).

\subsection{Burgers' equation}\label{subsubsec:burgers}
The inviscid Burgers' equation consists of the scalar hyperbolic conservation law
\begin{align}\label{eq:HCL}
    U_{t} + f(U)_{x} = 0,
\end{align}
with flux function $f(U) = \frac{1}{2}U^{2}$. 
We consider initial data
$U(0,x)  = \frac{1}{2} - \frac{1}{4}\sin{\pi x}$,
on a periodic domain $x \in [0,2)$.
The solution advances to the right where it eventually exhibits a shock. 
We perform a semi-discretization
using an upwind approximation to obtain the system of ODEs
\begin{align*}\label{eq:burgers_flux}
	\frac{\textrm{d}}{\textrm{d} t} u_i = -\frac{f(u_{i}) - f(u_{i-1})}{\Delta x}.
\end{align*}
This spatial discretization is total-variation-diminishing (TVD) when
coupled with the forward Euler method under the restriction
\cite{Laney:1998}  %,Ketcheson/Macdonald/Gottlieb:2009}.
$$\Dt \leq {\Dt}_{\text{FE}} = \Delta x / \|U(0,x)\|_{\infty}.$$
Recall that a time discretization with SSP
coefficient $\sspcoef$ will give a TVD solution for $\Dt \leq
\sspcoef {\Dt}_{\text{FE}}$.

Burgers' equation was solved using an SSP $TM^{n-2}R$ scheme with time-step
restriction $\Dt = \sigma{\Dt}_{\text{FE}}$, where $\sigma$ indicates the size 
of the time step. 
We integrate to roughly time $t_\text{f} = 1.62$ with $200$ points in space.
Figure~\ref{fig:burgers_cont} shows that if $\sigma$ is chosen less than the SSP
coefficient of the main method, then no oscillations are observed. 
If this stability limit is violated, then oscillations may appear,
as shown in Figure~\ref{fig:burgers_cont_b}.
We measure these oscillations by computing the total variation of the
numerical solution.

% Colin: potential for confusion here since this doesn't line up with
% Table 6.1, Of course its a different problem) but I still don't see
% the value of including this:
%When $M$ is an ESSPRK($4,4,2$) method,
%it turns out that $\sigma = 1.57$ is the largest value of $\sigma$
%for which the total variation is monotonically decreasing during the
%calculation.
%This is $79\%$ larger than the value of the SSP coefficient $\sspcoef = 0.88$.

\begin{figure}
    \centering
    \subfloat[$\sigma = 0.88$]{\label{fig:burgers_cont_a}
      \includegraphics[width=0.45\textwidth]{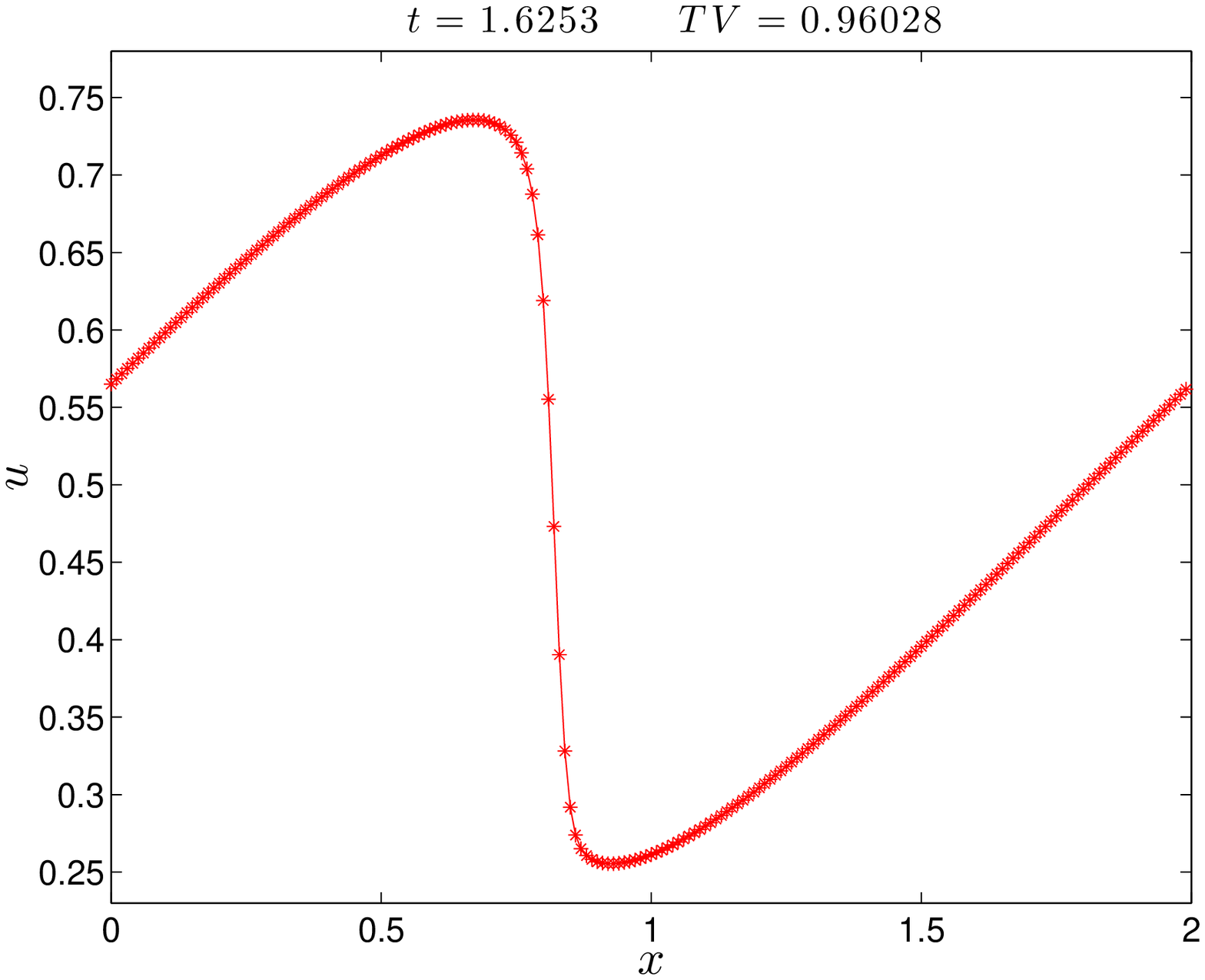}}
    \qquad
    \subfloat[$\sigma = 1.60$]{\label{fig:burgers_cont_b}
      \includegraphics[width=0.45\textwidth]{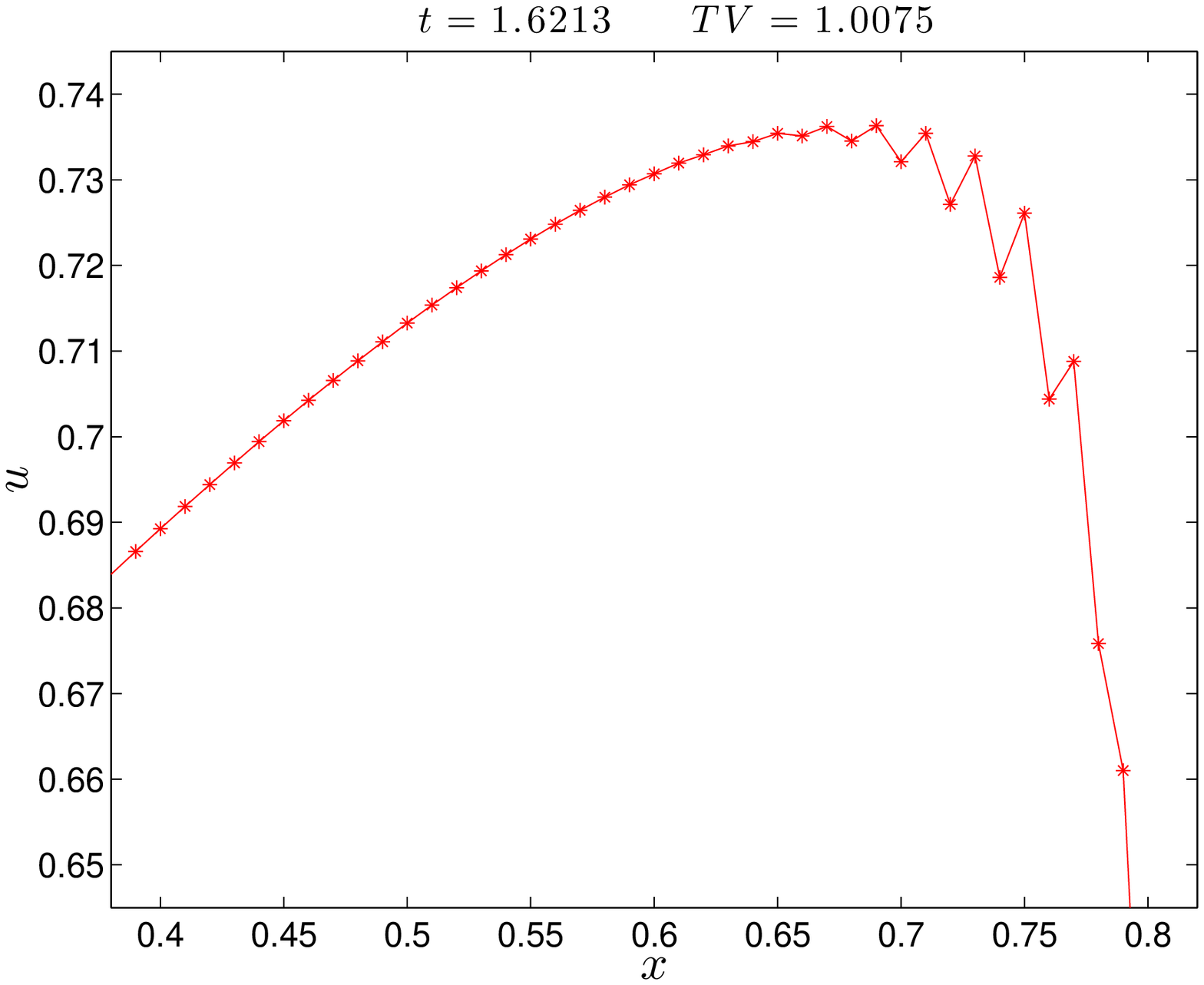}}
    \caption{Solution of Burgers' equation at the final time with continuous initial data, using a
    $TM^{n-2}R$ scheme, where $M$ is the optimal ESSPRK($4,4,2$).
    The time-step used is $\Dt = \sigma{\Dt}_{\text{FE}}$:
    in (a) we take $\sigma$ as the SSP coefficient
    $\sigma = \sspcoef = s \times 0.22 = 0.88$
    (see Table~\ref{tab:eff_SSP_coeff}) and no oscillations are observed.
    However, in (b) we take $\sigma > \sspcoef$ and we observe oscillations
    (note (b) is magnified to show these).
    %From Table~\ref{tab:eff_SSP_coeff} the SSP coefficient is $ \sspcoef = s \times 0.22 = 0.88$.
    %Figure~\ref{fig:burgers_cont_b} shows a zoom in the region of space where
    %oscillations appear.
    Here $TV$ denotes the discrete total variation of the solution at the final time:
    a value greater than 1 (the total variation of the initial condition)
    indicates a violation of the TVD condition.
    %Note that the solution in Figure~\ref{fig:burgers_cont_b} is
    %monotonically increasing at intermediate steps as well.
    }
    \label{fig:burgers_cont}
\end{figure}

We also consider Burgers' equation with a discontinuous
square wave initial condition
\begin{equation}\label{eq:burgers_discont_IC}
    U(0,x)  = \left\{
                \begin{array}{ll}
                  1, & \hbox{$0.5 \leq x \leq 1.5$} \\
                  0, & \hbox{otherwise.}
                \end{array}
              \right.
\end{equation}
The solution consists of a rarefaction (i.e., an expansion fan) and a
moving shock.
Again we use $200$ points in space and we compute the solution until
roughly time $t_\text{f} = 0.6$, using a time-step $\Dt = \sigma{\Dt}_{\text{FE}}$.
Figure~\ref{fig:burgers_discont} shows the result of solving the
discontinuous problem using an SSP $TM^{n-2}R$ scheme, where $M$ is an
ESSPRK($5,4,2$) method with SSP coefficient $\sspcoef = 1.95$.
In this case, $\sigma = 1.98$ appears to be the largest value
for which the total variation is monotonically decreasing during the 
calculation.
This is only $2\%$ larger than the value of the SSP coefficient.
Figure~\ref{fig:burgers_discont_b} shows part of the solution exhibiting 
oscillations when $\sigma$ is larger than the SSP coefficient.

\begin{figure}
    \centering
    \subfloat[$\sigma = 1.95$]{\label{fig:burgers_discont_a}
      \includegraphics[width=0.45\textwidth]{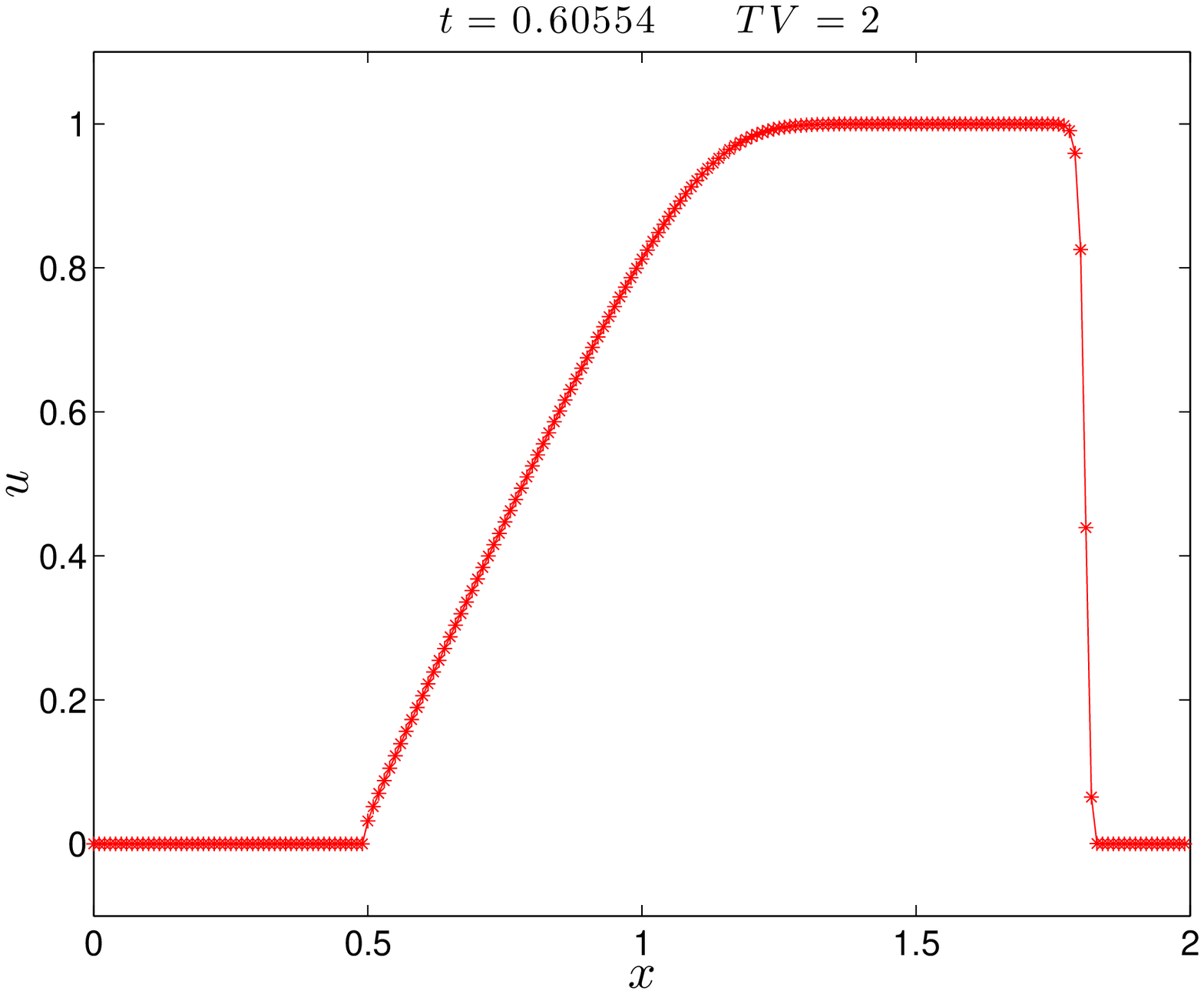}}
    \qquad
    \subfloat[$\sigma = 2.15$]{\label{fig:burgers_discont_b}
      \includegraphics[width=0.45\textwidth]{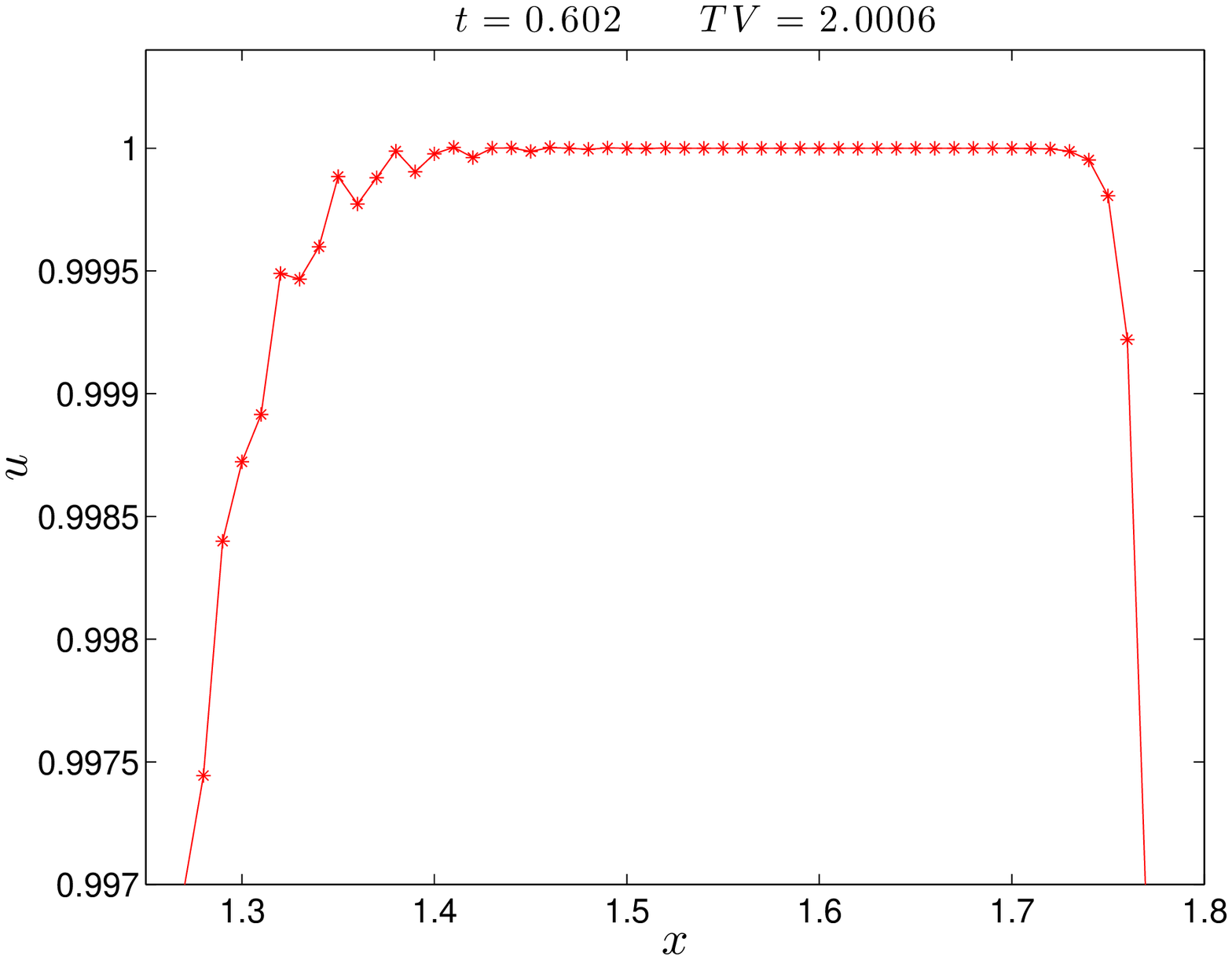}}
    \caption{Solution of Burgers' equation at the final time with discontinuous initial data, using a
    $TM^{n-2}R$ scheme, where $M$ is ESSPRK($5,4,2$) method.
    The time-step used is $\Dt = \sigma{\Dt}_{\text{FE}}$:
    in (a) we take $\sigma$ as the SSP coefficient
    $\sigma = \sspcoef = s \times 0.39 = 1.95$
    %(see Table~\ref{tab:eff_SSP_coeff})
    and no oscillations are observed.
    However, in (b) we take $\sigma > \sspcoef$ and, when magnified,
    we observe oscillations.
    %From Table~\ref{tab:eff_SSP_coeff} the SSP coefficient is $ \sspcoef = s \times 0.39 = 1.95$.
    %Figure~\ref{fig:burgers_discont_b} shows a zoom in the region of space where
    %oscillations appear.
    Here $TV$ denotes the total variation semi-norm of the solution at the final time:
    a value greater than 2 indicates an increase in total variation.
    %Note that the solution in Figure~\ref{fig:burgers_discont_b} is
    %monotonically increasing at intermediate steps as well.
    }
    \label{fig:burgers_discont}
\end{figure}

For various schemes, Table~\ref{tab:observed_SSP_coeff} shows the
maximum observed values of $\sigma$ for which the numerical solution
is total variation decreasing for the entire computation.  With the
exception of the four-stage effective order four methods, we note good
agreement between these experimental values and the SSP coefficients
predicted by the theory.

\begin{table}
    \caption{Maximum observed coefficients exhibiting the TVD property on 
    the Burgers' equation example with discontinuous data \eqref{eq:burgers_discont_IC}.
    The numbers in parenthesis indicate the increase relative to the corresponding SSP
    coefficients.}
    \centering
    \begin{tabular}{>{\hspace*{-4pt}}c@{\hspace{3pt}}c@{\hspace{3pt}}c@{\hspace{2pt}} c@{\hspace{2pt}} c@{\hspace{2pt}} c@{\hspace{2pt}} c@{\hspace{2pt}} c@{\hspace{2pt}} c@{\hspace{2pt}} c@{\hspace{2pt}} c@{\hspace{2pt}}}
        \toprule
        %\multicolumn{2}{|c|}{\backslashbox{\hspace{2pt}\vspace{1pt}$q\,,\,p$}{\vspace{-5.5pt}$s$}} & $3$ & $4$ & $5$ & $6$ & $7$ & $8$ & $9$ & $10$ & $11$ \\
        \multirow{2}{*}{$q$} &
        \multirow{2}{*}{\;$p$\;}
               &   \multicolumn{9}{c}{stages $s$} \\
            \cmidrule{3-11}
        &      &   $3$ & $4$ & $5$ & $6$ & $7$ & $8$ & $9$ & $10$ & $11$ \\
        \midrule
        $3$ & $2$ & \small$1.04(4\%)$ & \small$2.00(0\%)$ & \small$2.65(0\%)$ & \small$3.52(0\%)$ & \small$4.29 (0\%)$ & \small$5.11(0\%)$ & \small$6.00(0\%)$ & \small$6.79(0\%)$ & \small$7.63(0\%)$\\
        $4$ & $2$ & \small$-$ & \small$1.07(22\%)$ & \small$1.98(2\%)$ & \small$2.69(2\%)$ & \small$3.56(1\%)$ & \small$4.33(1\%)$ & \small$5.16(1\%)$ & \small$6.05(1\%)$ & \small$6.84(1\%)$ \\
        $4$ & $3$ & \small$-$ & \small$1.05(35\%)$ & \small$1.89(3\%)$ & \small$2.63(2\%)$ & \small$3.53(1\%)$ & \small$4.31(1\%)$ & \small$5.16(1\%)$ & \small$6.04(1\%)$ & \small$6.85(1\%)$ \\
        \bottomrule
    \end{tabular}
    \label{tab:observed_SSP_coeff}
\end{table}
    
We also note the necessity of our modified starting and stopping methods in the $RM^{n-2}T$ approach: in this
example if we use the original approach of $S$ and $S^{-1}$,
the solution exhibits oscillations immediately following the
application of the starting perturbation method $S$.

        \section{Conclusions}\label{sec:Conclusion}
We use the theory of strong stability preserving time discretizations
with Butcher's algebraic interpretation of order to construct
explicit SSP Runge--Kutta schemes with an effective order of accuracy.
These methods, when accompanied by starting and stopping
methods, attain an order of accuracy higher than their (classical) order.
We propose a new choice of starting and stopping methods to allow the
overall procedure to be SSP.
We prove that explicit Runge--Kutta methods with strictly positive 
weights have at most effective order four. 
This extends the barrier already known in the case of classical order
explicit SSPRK methods.

SSP Runge--Kutta methods of effective order three and four
are constructed by numerical optimization.
Most of the methods found are optimal because they achieve
the upper bound on the SSP coefficient known from linear
problems.
Also, despite the non-existence of four-stage, order four explicit SSPRK methods, 
we find effective order four methods with four stages (of classical 
order two and three). 
We perform numerical tests which confirm the accuracy and
SSP properties of the new methods.
%Interestingly, compared to what is observed with standard SSPRK schemes,
%there was much closer agreement between the (theoretical) stability limit
%given by the SSP coefficient and that observed in practice
%(e.g., when measuring total variation of solutions of Burgers' equation).

The ideas here are applied to explicit Runge--Kutta methods, but they
could also be applied to other classes of methods including implicit
Runge--Kutta methods, general linear methods, and Rosenbrock methods.

        \section*{Acknowledgments}{
                The authors would like to thank the anonymous referees for their helpful 
                and valuable suggestions on the paper.
        }

        %%% Appendix and Bibliography %%%
        %\appendix
        %\include{AppendixA}
        \bibliographystyle{acm} % other style: apalike
        \bibliography{bibliography2}

\begin{thebibliography}{10}

\bibitem{Bullen:inequalities}
{\sc Bullen, P.~S.}
\newblock {\em Handbook of Means and their Inequalities}.
\newblock Kluwer, 2003.

\bibitem{Butcher1969}
{\sc Butcher, J.~C.}
\newblock The effective order of {R}unge--{K}utta methods.
\newblock In {\em Conf. on {N}umerical {S}olution of {D}ifferential {E}quations
  ({D}undee, 1969)}. Springer, 1969, pp.~133--139.

\bibitem{Butcher1972}
{\sc Butcher, J.~C.}
\newblock An algebraic theory of integration methods.
\newblock {\em Math. Comp. 26}, 117 (1972), 79--106.

\bibitem{Butcher1998}
{\sc Butcher, J.~C.}
\newblock Order and effective order.
\newblock {\em Appl. Numer. Math. 28}, 2-4 (1998), 179--191.
\newblock Eighth Conference on the Numerical Treatment of Differential
  Equations (Alexisbad, 1997).

\bibitem{Butcher2008_book}
{\sc Butcher, J.~C.}
\newblock {\em Numerical methods for ordinary differential equations},
  second~ed.
\newblock Wiley, 2008.

\bibitem{Butcher1996}
{\sc Butcher, J.~C., and Sanz-Serna, J.~M.}
\newblock The number of conditions for a {R}unge--{K}utta method to have
  effective order {$p$}.
\newblock {\em Appl. Numer. Math. 22}, 1-3 (1996), 103--111.

\bibitem{dahlquist2006}
{\sc Dahlquist, G., and Jeltsch, R.}
\newblock Reducibility and contractivity of {R}unge--{K}utta methods revisited.
\newblock {\em BIT 46}, 3 (2006), 567--587.

\bibitem{Ferracina2004}
{\sc Ferracina, L., and Spijker, M.~N.}
\newblock Stepsize restrictions for the total-variation-diminishing property in
  general {R}unge--{K}utta methods.
\newblock {\em SIAM J. Numer. Anal. 42}, 3 (2004), 1073--1093.

\bibitem{Ferracina2005}
{\sc Ferracina, L., and Spijker, M.~N.}
\newblock An extension and analysis of the {S}hu--{O}sher representation of
  {R}unge--{K}utta methods.
\newblock {\em Math. Comp. 74}, 249 (2005), 201--219.

\bibitem{Gottlieb2011a}
{\sc Gottlieb, S., Ketcheson, D.~I., and Shu, C.~W.}
\newblock {\em Strong Stability Preserving {R}unge--{K}utta and Multistep Time
  Discretizations}.
\newblock World Scientific, Jan. 2011.

\bibitem{Gottlieb/Ruuth:SSPfastdownwind}
{\sc Gottlieb, S., and Ruuth, S.~J.}
\newblock Optimal strong-stability-preserving time-stepping schemes with fast
  downwind spatial discretizations.
\newblock {\em J. Sci. Comput. 27}, 1-3 (2006), 289--303.

\bibitem{Hairer1987_book}
{\sc Hairer, E., N{\o}rsett, S.~P., and Wanner, G.}
\newblock {\em Solving ordinary differential equations {I}: Nonstiff problems},
  vol.~8 of {\em Springer Series in Computational Mathematics}.
\newblock Springer-Verlag, 1987.

\bibitem{Hairer1974}
{\sc Hairer, E., and Wanner, G.}
\newblock On the {B}utcher group and general multi-value methods.
\newblock {\em Computing 13}, 1 (1974), 1--15.

\bibitem{Higueras2004}
{\sc Higueras, I.}
\newblock On strong stability preserving time discretization methods.
\newblock {\em J. Sci. Comput. 21}, 2 (2004), 193--223.

\bibitem{Ketcheson2008}
{\sc Ketcheson, D.~I.}
\newblock Highly efficient strong stability-preserving {R}unge--{K}utta methods
  with low-storage implementations.
\newblock {\em SIAM J. Sci. Comput. 30}, 4 (2008), 2113--2136.

\bibitem{ketcheson2009a}
{\sc Ketcheson, D.~I.}
\newblock Computation of optimal monotonicity preserving general linear
  methods.
\newblock {\em Math. Comp. 78}, 267 (2009), 1497--1513.

\bibitem{Ketcheson:2011:downwind}
{\sc Ketcheson, D.~I.}
\newblock Step sizes for strong stability preservation with downwind-biased
  operators.
\newblock {\em SIAM J. Numer. Anal. 49}, 4 (2011), 1649--1660.

\bibitem{Ketcheson/Macdonald/Gottlieb:2009}
{\sc Ketcheson, D.~I., Macdonald, C.~B., and Gottlieb, S.}
\newblock Optimal implicit strong stability preserving {R}unge--{K}utta
  methods.
\newblock {\em Appl. Numer. Math. 59}, 2 (2009), 373--392.

\bibitem{Kraaijevanger1986}
{\sc Kraaijevanger, J.~F.~B.~M.}
\newblock Absolute monotonicity of polynomials occurring in the numerical
  solution of initial value problems.
\newblock {\em Numer. Math. 48}, 3 (1986), 303--322.

\bibitem{Kraaijevanger1991}
{\sc Kraaijevanger, J.~F.~B.~M.}
\newblock Contractivity of {R}unge--{K}utta methods.
\newblock {\em BIT 31}, 3 (1991), 482--528.

\bibitem{Laney:1998}
{\sc Laney, C.~B.}
\newblock {\em Computational gasdynamics}.
\newblock Cambridge University Press, 1998.

\bibitem{Ruuth:2006}
{\sc Ruuth, S.~J.}
\newblock Global optimization of explicit strong-stability-preserving
  {R}unge--{K}utta methods.
\newblock {\em Math. Comp. 75}, 253 (2006), 183--207.

\bibitem{Ruuth2002}
{\sc Ruuth, S.~J., and Spiteri, R.~J.}
\newblock Two barriers on strong-stability-preserving time discretization
  methods.
\newblock {\em J. Sci. Comput. 17}, 1-4 (2002), 211--220.
\newblock Proceedings of the Fifth International Conference on Spectral and
  High Order Methods ({ICOSAHOM}-01).

\bibitem{Ruuth2004}
{\sc Ruuth, S.~J., and Spiteri, R.~J.}
\newblock High-order strong-stability-preserving {R}unge--{K}utta methods with
  downwind-biased spatial discretizations.
\newblock {\em SIAM J. Numer. Anal. 42}, 3 (2004), 974--996.

\bibitem{Gottlieb/Shu:1998}
{\sc S, G., and Shu, C.~W.}
\newblock Total variation diminishing {R}unge--{K}utta schemes.
\newblock {\em Math. Comp. 67}, 221 (1998), 73--85.

\bibitem{Gottlieb2001}
{\sc S, G., Shu, C.~W., and Tadmor, E.}
\newblock Strong stability-preserving high-order time discretization methods.
\newblock {\em SIAM Rev. 43}, 1 (2001), 89--112.

\bibitem{Shu/Osher:1988}
{\sc Shu, C.~W., and Osher, S.}
\newblock Efficient implementation of essentially nonoscillatory
  shock-capturing schemes.
\newblock {\em J. Comput. Phys. 77}, 2 (1988), 439--471.

\bibitem{Spiteri2003a}
{\sc Spiteri, R.~J., and Ruuth, S.~J.}
\newblock A new class of optimal high-order strong-stability-preserving time
  discretization methods.
\newblock {\em SIAM J. Numer. Anal. 40}, 2 (2002), 469--491.

\bibitem{Spiteri2003b}
{\sc Spiteri, R.~J., and Ruuth, S.~J.}
\newblock Non-linear evolution using optimal fourth-order
  strong-stability-preserving {R}unge--{K}utta methods.
\newblock {\em Math. Comput. Simulation 62}, 1-2 (2003), 125--135.

\end{thebibliography}

\end{document}